\newtheorem{theorem}{Theorem}
\newtheorem{corollary}[theorem]{Corollary}
\newtheorem{lemma}[theorem]{Lemma}
\newtheorem{problem}[theorem]{Problem}
\newtheorem{question}[theorem]{Question}
\newtheorem{proposition}[theorem]{Proposition}
\newtheorem{remark}[theorem]{Remark}
\renewcommand{\emptyset}{\varnothing}
\title{Monophonic number of Kneser graphs and strongly 2-monophonic graphs}
\date{\today}
\author[1,2]{Bo\v{s}tjan Bre\v{s}ar}
\author[3,4]{Mar\'ia Gracia Cornet}
\author[1,2]{Tanja Dravec}
\affil[1]{\footnotesize Faculty of Natural Sciences and Mathematics, University of Maribor, Slovenia}
\affil[2]{\footnotesize Institute of Mathematics, Physics and Mechanics, Ljubljana, Slovenia}
\affil[3]{\footnotesize Depto. de Matem\'atica, FCEIA, Universidad Nacional de Rosario, Argentina}
\affil[4]{\footnotesize Consejo Nacional de Investigaciones Científicas y Técnicas, Argentina}
\begin{document}

\maketitle

\begin{abstract}
 Given a graph $G$ a set $S\subset V(G)$ is called monophonic if every vertex in $G$ lies on some induced path between two vertices in $S$. The monophonic number, $m(G)$, of $G$, which is the smallest cardinality of a monophonic set in $G$, has been studied from various perspectives. In this paper, we establish $m(K(n,r))$ for all Kneser graphs $K(n,r)$, where $n\ge 2r$. In addition, when $r\ge 3$, we prove an even stronger property, notably that every pair of non-adjacent vertices in $K(n,r)$ forms a monophonic set. We call the graphs satisfying this property strongly $2$-monophonic graphs. We present several (sufficient and necessary) conditions for a graph to be strongly $2$-monophonic, and prove that the Cartesian product of any two strongly $2$-monophonic graphs is also such. Besides non-complete Hamming graphs, we also prove that every Johnson graph is strongly $2$-monophonic, whereas chordal graphs, with the exception of the graphs $K_n-e$, do not enjoy this property. 
 \end{abstract}

\noindent {\bf Keywords}: monophonic convexity, monophonic number, Kneser graphs, Johnson graphs, Cartesian product 

\medskip

\noindent {\bf AMS subject classification (2010)}: 	05C69, 	05C12, 05C76, 05D05

\section{Introduction}
\label{sec: intro}

Over time, significant progress in graph theory has produced and examined numerous classes of graphs. Notably, many of these are connected to classical combinatorial structures with a fundamental example of the generalized Johnson graphs. Formally, given $n\geq r$, for each $i\in\{0,\dots,r\}$, the \textit{generalized Johnson graph} $J(n,r,i)$ is the graph whose vertices are all the $r$-element subsets of a fixed $n$-element set, and two vertices $A$ and $B$ are adjacent iff $|A\cap B|=i$. The cases $i=0$ and $i=r-1$ are the widely-known classes of the Kneser graph $K(n,r)$ and the classical \textit{Johnson graph} $J(n,r)$, respectively.
The interest for Kneser graphs goes back to 1960s and 1970s when two classical theorems concerning their independence and chromatic number were proved~\cite{erdos1961,lovasz1978}. Many other graph invariants have been investigated in Kneser graphs, which makes them one of the most intensively studied classes of graphs.   

A family $\mathcal{C}$ of subsets of a finite set $X$ is a {\em convexity} on $X$ if $\emptyset ,X \in {\mathcal{C}}$ and $\mathcal{C}$ is closed under intersections~\cite{vel-93}. For an interval function $I: X \times X \to 2^X$ a   set $S$ is {\em convex} if $I(x,y) \subseteq S$ for any $x,y \in S$. In the context of graphs, the most prominent instance of interval function is given by the geodesic interval, $I_{G}(u,v)$, which by definition contains all vertices that lie on a shortest path between the vertices $u$ and $v$ in a graph $G$. The {\em geodetic closure} of $S\subset V(G)$, is defined as $I_G \left[ S \right]=\bigcup_{\{x,y\}\subset S}{I_G(x,y)}$, and a set $S$ of vertices in $G$ is a {\it geodetic set} in $G$ if $I_G \left[ S \right] = V (G)$. The {\em geodetic number}, $g(G)$, of a graph $G$, is the minimum cardinality of a geodetic set in $G$.
Besides the geodesic intervals, which are built from shortest paths, the monophonic intervals arising from induced paths have also been intensively studied; see~\cite{chmusi-05} for a survey on various types of interval functions in discrete structures. Monophonic convexity (known also under the names induced path convexity and minimal path convexity) was considered already by Duchet~\cite{Duchet} and Farber and Jamison~\cite{FJ} in the 1980s. 
In this context, given a graph $G$ and $u,v \in V(G)$, by $J_G(u,v)$ we denote the {\it monophonic interval} between $u$ and $v$, which is the set of all vertices that lie on some induced $u,v$-path in $G$. A set of vertices $S\subseteq V(G)$ is a  {\em monophonic set} in $G$, if for every $u\in V(G)$ there exist $x,y\in S$ such that $u\in J_G(x,y)$. The {\it monophonic number}, $m(G)$, of $G$ is the smallest cardinality of a monophonic set in $G$. The {\em monophonic hull number} of $G$ is the smallest cardinality of a set $S\subset V(G)$ such that no proper subset of $V(G)$ that contains $S$ is monophonically convex. 

Geodetic sets and geodesic convexity were studied from various aspects and in numerous classes of graphs; see three comprehensive surveys~\cite{BKT-chapter,cpz,Pela}. 
It was shown in~\cite{Ataci2002} that the determination of $g(G)$ is NP-hard. Moreover, it was proved in~\cite{Do2010} that it is NP-complete to decide for a given chordal or chordal bipartite graph $G$ and a given 
integer $k$, whether $G$ has a geodetic set of cardinality at most $k$.
Recently, the geodetic number was investigated in Kneser graphs~\cite{BEDO2023127964}. For Kneser graphs of diameter 2, the exact values of both the geodetic number and the geodetic hull number were determined, while for Kneser graphs of larger diameter, upper bounds for these parameters were established. 
Concerning the computational complexity issues, there is an interesting discrepancy: there is a polynomial-time algorithm that computes the monophonic hull number of an arbitrary graph, while the determination of $m(G)$ is NP-hard~\cite{DPS}; see also~\cite{CDS} for a continuation of these studies. Hernando et al.~\cite{HJM} investigated relations between Steiner, geodetic and monophonic hull sets, Paluga and Canoy~\cite{PC} studied the monophonic number under the operations of join and lexicographic product, while in~\cite{pal,sancha} monophonic number and monophonic hull number in Cartesian products of graphs were investigated. Recently, the monophonic numbers and monophonic hull numbers of complementary prisms over arbitrary graphs were studied in~\cite{NCN}, while in~\cite{GS} monophonic numbers of corona products of graphs were investigated. 

In this paper, we fill the gap in the literature by studying the monophonic number in Kneser graphs. We determine the exact values of monophonic numbers in all Kneser graph, and prove an even stronger property for a Kneser graph $K(n,r)$, where $r\ge 3$, that every pair of non-adjacent vertices in $K(n,r)$ is a monophonic set. We find this property quite fascinating and call the graphs that satisfy it strongly $2$-monophonic graphs. We prove that several other classes of graphs (such as Johnson graphs and non-complete Hamming graph) are strongly $2$-monophonic, and that some other classes of graphs (such as chordal graphs) have almost no graph with this property. 

The paper is organized as follows. In the remainder of this section we provide some basic definitions and notation, while Section~\ref{s:Kneser} is devoted to our main results concerning the monophonic sets in Kneser graphs. As mentioned earlier $m(K(n,r))=2$ when $r\ge 3$, while $m(K(n,2))=3$. In the proof we combine a careful selection of (induced) paths with the monotonicity of the function $m$ in Kneser graphs which states that $m(K(n,r))$ is non-increasing with increasing $n$.
Section~\ref{s:strongly} is dedicated to the study of strongly 2-monophonic graphs, which are the graphs in which every pair of non-adjacent vertices is a monophonic set. We present several necessary conditions for a graph to be strongly 2-monophonic and investigate the existence of strongly 2-monophonic graphs in several graph classes. We also prove that the Cartesian product of two connected strongly 2-monophonic graphs, distinct from $P_3$, is again strongly 2-monophonic.

\medskip

{\bf{Definitions and notation}}

\medskip

All graphs considered in this paper are finite and simple.

Let $G=(V(G),E(G))$ be a graph. The order of $G$ is denoted by $n(G)$. For a vertex $v \in V(G)$, $N_G(v)$ denotes the set of all neighbors of $v$ in $G$, while $N_G[v]=N_G(v) \cup \{v\}$. 
A vertex $v$ in $G$ is {\em universal} in $G$ if $N[v]=V(G)$.
The \emph{distance} $d_G(u,v)$ between vertices $u,v\in V(G)$ is the length of a shortest path between $u$ and $v$ in $G.$  
The \emph{diameter} of a graph, $diam(G)$, is defined as  $\displaystyle diam(G)=\max\lbrace d_G(u,v)~|~ u,v\in V(G)\rbrace$.
When the graph $G$ is clear from the context, we may omit the subscripts.
A set $S\subseteq V(G)$ is a \emph{clique} of $G$ if $uv\in E(G)$ for each pair $u,v\in V(G)$. The \emph{clique number} of $G$, denoted by $\omega(G)$, is the maximum cardinality of a clique of $G$.
If $G$ is connected, a vertex $v$ of $G$ is called a \emph{cut vertex} if $G-v$ is disconnected, and a subset $S \subseteq V(G)$ is called a \emph{cut set} if $G-S$ is disconnected.

The \emph{Cartesian product} of two graphs $G$ and $H$, denoted by $G \Box H$, is the graph with vertex set $V(G \Box H) = V(G) \times V(H)$, where two vertices $(g,h)$ and $(g',h')$ are adjacent if and only if either $g = g'$ and $hh' \in E(H)$, or $h = h'$ and $gg' \in E(G)$. Hypercubes $Q_n$ and Hamming graphs $H_{m_1,\ldots ,m_n}$ can be defined recursively in the following way: $Q_1=K_2$ and $Q_n=Q_{n-1}\Box K_2$ $H_{m_1}=K_{m_1}$ and $H_{m_1,\ldots , m_n}=H_{m_1,\ldots ,m_{n-1}}\Box K_{m_n}$ for all $n \geq 2$ and arbitrary positive integers $m_1,\ldots ,m_n$.

\section{Monophonic number of Kneser graphs}\label{s:Kneser}

In this section, we first prove that the function $m(K(n,r))$ is non-increasing with respect to $n$. Then we show that $m(K(n,2))=3$ for any $n\geq 5$ and $m(K(n,r))=2$ for $r \geq 3$ and $n \geq 2r+1$. We also prove that a much stronger condition holds, notably that every pair of non-adjacent vertices forms a monophonic set. This property seems to be very interesting and we study it in more details in the next section. 

\smallskip

First note that $m(K(2r,r))=|V(K(2r,r))|=\binom{2r}{r}$.
Note also that, using the standard notation for vertices  of Kneser graphs, a vertex $u \in V(K(n,r))$ also belongs to $V(K(n+1,r))$, since it is represented as an $r$-subset of $[n] \subseteq [n+1]$.

\begin{theorem}\label{thm:monotonicity}
    For any positive integers $n$ and $r$, where $n \geq 2r$, $$m(K(n,r)) \geq m(K(n+1,r)).$$ Moreover, if $S$ is a monophonic set of $K(n,r)$, then $S$ is a monophonic set of $K(n+1,r)$.
\end{theorem}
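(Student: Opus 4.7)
I will prove the stronger ``moreover'' statement, whence the inequality follows by choosing $S$ to be a minimum monophonic set of $K(n,r)$. Identifying each $r$-subset of $[n]$ with the same subset of $[n+1]$, we have $V(K(n,r))\subseteq V(K(n+1,r))$, and since adjacency in either graph is just disjointness of the subsets, \emph{any induced path in $K(n,r)$ is automatically an induced path in $K(n+1,r)$} (the inducedness condition depends only on pairwise intersections of the $r$-subsets, not on the ambient universe). Consequently, if $u\in V(K(n,r))$ we are done: take an induced $xy$-path through $u$ guaranteed in $K(n,r)$ with $x,y\in S$ and view it in $K(n+1,r)$.

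Assume now $u=\{n+1\}\cup A$ with $A\subseteq[n]$, $|A|=r-1$. For every $i\in[n]\setminus A$ (a set of size $n-r+1\ge r+1$), the ``twin'' $u'_i=\{i\}\cup A$ is a vertex of $K(n,r)$ that differs from $u$ only in a single element. The argument splits according to whether some twin lies outside $S$.

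\emph{Case 1 (some $u'_i\notin S$).} Use monophonicity of $S$ in $K(n,r)$ to obtain an induced $xy$-path $P\colon x=v_0,\ldots,v_k=y$ in $K(n,r)$ through $u'_i=v_l$; since $u'_i\notin S$, $u'_i$ must be internal ($0<l<k$), and its neighbors $v_{l\pm 1}$ in $P$ are disjoint from $\{i\}\cup A$, hence from $u=\{n+1\}\cup A$. Set $j_1=\min\{j:v_j\cap A=\emptyset\}\le l-1$ and $j_2=\max\{j:v_j\cap A=\emptyset\}\ge l+1$, so $j_2-j_1\ge 2$. The shortcut sequence $v_0,\ldots,v_{j_1},u,v_{j_2},\ldots,v_k$ is the desired induced $xy$-path through $u$ in $K(n+1,r)$: non-edges among the $v$'s follow because $P$ is induced (the one ``new'' non-adjacency $v_{j_1}v_{j_2}$ is guaranteed by $j_2-j_1\ge 2$), and non-edges $v_au$ for $a<j_1$ or $a>j_2$ follow from the extremality of $j_1,j_2$ (which forces $v_a\cap A\ne\emptyset$).

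\emph{Case 2 (every $u'_i\in S$).} Pick distinct $i,j\in[n]\setminus A$ and define $C=\{j\}\cup E_C$, $D=\{i\}\cup E_D$, where $E_C,E_D$ are $(r-1)$-subsets of $[n]\setminus(A\cup\{i,j\})$ (a set of size $n-r-1\ge r-1$) chosen to share a common element. Then $u'_i,C,u,D,u'_j$ is an induced path in $K(n+1,r)$ through $u$ with endpoints in $S$: the consecutive pairs are disjoint by construction, and the required non-adjacencies $u'_i\cap u=A$, $u'_i\cap D=\{i\}$, $u'_i\cap u'_j=A$, $C\cap u'_j=\{j\}$, $C\cap D=E_C\cap E_D\ne\emptyset$, $u\cap u'_j=A$ all hold. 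The main obstacle is exactly this Case 2: when $S$ happens to contain the full pencil $\{u'_i:i\in[n]\setminus A\}$, no twin-based shortcut is available and one must engineer the $5$-vertex detour above, so arranging $C$ and $D$ simultaneously to satisfy $j\in C$, $i\in D$, and $C\cap D\ne\emptyset$ (blocking every potential chord) is the most delicate step of the proof.
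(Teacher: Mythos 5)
Your proof is correct and follows essentially the same route as the paper's: vertices of $K(n,r)$ are handled by observing that $K(n,r)$ is an induced subgraph of $K(n+1,r)$, and a new vertex $u=A\cup\{n+1\}$ is handled by distinguishing whether some twin $A\cup\{t\}$ lies outside $S$ (in which case you shortcut an induced path through that twin, exactly as the paper does) or all twins lie in $S$ (in which case you build an explicit five-vertex induced path between two twins through $u$). The only cosmetic differences are that the paper treats $n=2r$ separately with a three-vertex path while your Case 2 covers it uniformly, and the paper uses a single auxiliary $(r-1)$-set for both intermediate vertices where you allow two overlapping ones; note also that both your argument and the paper's implicitly need $r\ge 2$ (the non-adjacencies rely on $|A|=r-1\ge 1$), the case $r=1$ being degenerate.
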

\begin{proof}
    Let $S \subseteq V(K(n,r))$ be a monophonic set with $|S|=m(K(n,r))$. Let $u=\{i_1,\ldots ,i_{r-1},n+1\}$ be an arbitrary vertex of $V(K(n+1,r))\setminus V(K(n,r))$ and let $I'=[n]\setminus u.$ If $n=2r$, then $|I'|=r+1$ and $S=V(K(2r,r))$. Denote $I'=\{x_1,\ldots ,x_{r+1}\}$ and $x=\{x_1,\ldots ,x_r\}$, $y=\{x_2,\ldots , x_{r+1}\}$. Since $S=V(K(2r,r))$, it follows that $x,y \in S$. Thus $u \in J_{K(2r+1,r)}(x,y)$ as $x,u,y$ is an induced $x,y$-path containing $u$. Hence we may now assume that $n \geq 2r+1$. 
    
    Assume first that there exists $x_t \in I'$ such that $u'=(u\setminus \{n+1\})\cup \{x_t\}=\{i_1,\ldots , i_{r-1},x_t\} \notin S$. Since $S$ is monophonic set of $K(n,r)$ and $u' \in V(K(n,r))$, there exist  $a,b \in S$ such that $u'\in J_{K(n,r)}(a,b)$. Let $P:a=a_0,a_1,\ldots ,a_{\ell-1},a_\ell=u',a_{\ell+1},\ldots , a_m=b$ be an induced $a,b$-path that contains $u'$. Since $a_{\ell-1}$ and $a_{\ell+1}$ are adjacent to $u'=a_\ell$ it follows that $u'\cap a_{\ell-1}=\emptyset$ and $u' \cap a_{\ell+1}=\emptyset$. Since $n+1 \notin a_{\ell-1} \cup a_{\ell+1}$, it follows that $u\cap a_{\ell-1}=\emptyset$ and $u \cap a_{\ell+1}=\emptyset$. Hence $u$ is in $K(n+1,r)$ adjacent to both $a_{\ell-1}$ and $a_{\ell+1}$. Now, let i be the smallest index from $[\ell-1]$ such that $ua_i \in E(K(n+1,r))$ and let $j$ be the largest index in $[m]$ such that $ua_m \in E(K(n+1,r))$. Note that $i\leq \ell-1$ and $j \geq \ell+1$. Then the path $a=a_0,\ldots , a_i,u',a_j,\ldots ,a_m=b$ is an induced $a,b$-path that contains $u$. Thus $u \in J_{K(n+1,r)}(a,b)$ as desired.

    Assume now that for any $x_t \in I'$ it follows that $\{i_1,\ldots ,i_{r-1},x_t\} \in S$. Let $i,j$ be arbitrary different elements of $I'$ and let $\hat{I}=[n]\setminus (u \cup \{i,j\})$. Since $n \geq 2r+1$, we have $|\hat{I}| \geq r-1$. Let $\{j_1,\ldots , j_{r-1}\}$ be an arbitrary subset of $\hat{I}$ of cardinality $r-1$. Let $a=\{i_1,\ldots ,i_{r-1},i\} \in S$ and $b=\{i_1,\ldots ,i_{r-1},j\} \in S$. Then $a,\{j,j_1,\ldots ,j_{r-1}\},u,\{i,j_1,\ldots j_{r-1}\}, b$ is an induced $a,b$-path that contains $u$, that is, $u \in J_{K(n+1,r)}(a,b)$.
\end{proof}

\begin{theorem}\label{thm:r=2}
    If $n\geq 5$, then $m(K(n,2))=3$.
\end{theorem}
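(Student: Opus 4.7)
The plan is to establish matching bounds $m(K(n,2))\le 3$ and $m(K(n,2))\ge 3$, using the canonical triple $u=\{1,2\}$, $v=\{1,3\}$, $w=\{2,3\}$ of pairwise non-adjacent vertices of $K(n,2)$.

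For the upper bound I would verify that $S=\{u,v,w\}$ is a monophonic set by partitioning each $z\in V(K(n,2))\setminus S$ according to $z\cap\{1,2,3\}$. A vertex $z$ disjoint from $\{1,2,3\}$ lies on the induced length-$2$ path $u,z,v$; a vertex of the form $\{1,k\}$ with $k\ge 4$ lies on the induced path $v,\{2,k'\},\{1,k\},w$ for any $k'\in\{4,\dots,n\}\setminus\{k\}$, which exists since $n\ge 5$; and the vertices $\{2,k\}$ and $\{3,k\}$ are covered symmetrically using $J_{K(n,2)}(u,w)$ and $J_{K(n,2)}(u,v)$.

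For the lower bound I would show that no two-vertex subset of $V(K(n,2))$ is monophonic. If $x,y$ are adjacent then $J_{K(n,2)}(x,y)=\{x,y\}$, since a longer $x,y$-path would leave $x$ and $y$ adjacent but non-consecutive, violating the induced condition. By the vertex-transitivity of $K(n,2)$ I may therefore assume the pair is $\{u,v\}$, and the task reduces to proving $w\notin J_{K(n,2)}(u,v)$. Assume for contradiction that $P=p_0,\dots,p_k$ is an induced $u,v$-path with $p_j=w$. Every vertex disjoint from $\{1,2,3\}$ is adjacent to each of $u,v,w$ simultaneously; since an induced path vertex has at most two neighbors on the path, no such vertex lies on $P$. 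Consequently every intermediate $p_i$ meets $\{1,2,3\}$ in exactly one element, so has the form $\{\alpha,k\}$ with $\alpha\in\{1,2,3\}$ and $k\in\{4,\dots,n\}$; call it \emph{type $a$}, \emph{$b$}, or \emph{$c$} according as $\alpha=1,2,3$. A short case analysis at the start of $P$ forces $p_1$ to be type $c$ (the only type adjacent to $u$), $p_2$ to be type $a$ (a type $b$ vertex would be adjacent to $v$ and hence occupy position $k-1$, leaving no slot on the path for $w$), and $p_3=w$ (the remaining options for $p_3$ are type $b$ or type $c$, each contradicting the non-adjacency requirements). A mirror argument from $v$ gives $p_{k-3}=w$, so $k=6$ and $P$ has the rigid shape
\[
u,\ \{3,a_1\},\ \{1,a_2\},\ w,\ \{1,a_4\},\ \{2,a_5\},\ v.
\]
Consecutive disjointness forces $a_1\ne a_2$ and $a_4\ne a_5$, while the non-adjacencies $p_1\not\sim p_4$, $p_1\not\sim p_5$ and $p_2\not\sim p_5$ force $a_1=a_4$, $a_1=a_5$ and $a_2=a_5$; together these give $a_4=a_5$, a contradiction.

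The main obstacle I foresee is the rigidity step pinning $P$ down to its unique length-$6$ shape: once the type reduction has eliminated every vertex disjoint from $\{1,2,3\}$, the closing parameter clash is automatic and works uniformly for all $n\ge 5$, with no separate treatment of $K(5,2)$ needed.
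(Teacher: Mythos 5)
Your proof is correct, and it follows a somewhat different route from the paper's in both halves. For the upper bound the paper only verifies that $\{\{1,2\},\{1,3\},\{2,3\}\}$ is monophonic in $K(5,2)$ and then lifts the bound to all $n\geq 6$ via the monotonicity result (Theorem~\ref{thm:monotonicity}); you instead exhibit explicit induced paths that work uniformly for every $n\ge 5$, which makes the theorem self-contained at the cost of a slightly longer verification. (One bookkeeping slip there: reading your path for $\{1,k\}$ backwards shows that the vertices $\{2,k\}$ already lie in $J_{K(n,2)}(v,w)$; they are non-adjacent to both $u$ and $w$, so they enter $J_{K(n,2)}(u,w)$ only via a longer path than your length-$3$ pattern. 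Since all you need is that the three intervals together cover $V(K(n,2))$, this is harmless.) For the lower bound both arguments reduce by symmetry to showing $\{2,3\}\notin J_{K(n,2)}(\{1,2\},\{1,3\})$, but the paper derives the contradiction by examining only the four path-neighbours of $u$, $w$ and $v$ and forcing two of them to coincide, whereas you classify every internal vertex by its trace on $\{1,2,3\}$ and pin any candidate path down to a unique length-$6$ shape before exhibiting the parameter clash; your version is more exhaustive and yields the full structure, the paper's is the minimal extract of the same idea. One parenthetical of yours is imprecise: a type-$b$ vertex at position $3$ is not excluded purely by non-adjacency, since for $k=4$ it would legitimately be the neighbour $p_{k-1}$ of $v$; you need to reinvoke the ``no slot left for $w$'' observation you already used for $p_2$. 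This is trivially repaired and does not affect correctness.
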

\begin{proof}
    It can be easily checked that $m(K(5,2)) \leq 3$, as $\{\{1,2\},\{1,3\},\{2,3\}\}$ is a monophonic set. Hence by Theorem~\ref{thm:monotonicity}, $m(K(n,2)) \leq 3$ for any $n\geq 6$. It remains to show that there does not exist a monophonic set of cardinality 2.

    Let $n\geq 5$ and suppose that there exists a monophonic set $S=\{u,v\}$ of cardinality 2. Since two adjacent vertices cannot form a monophonic set, it follows that $u \cap v \neq \emptyset$. Without loss of generality we may assume that $u=\{1,2\},v=\{1,3\}$. We will show that $w=\{2,3\} \notin J_{K(n,2)}(u,v)$. For the purpose of contradiction suppose that there exists an induced $u,v$-path $P$ that contains $w$. Since any vertex of the $u,w$-subpath $P'$ of $P$ is non-adjacent to $v$, it contains at least one element from $v=\{1,3\}$. Hence the neighbor of $u$ on $P'$ is $u_1=\{3,j\}$ for some $j \in [n] \setminus \{1,2,3\}$ and the neighbor $u_2$ of $w$ on $P'$ is $\{1,i\}$ for some $i \in [n] \setminus \{1,2,3\}$. Now consider the $w,v$-subpath $P''$ of $P$. Since $u$ is not adjacent to vertices of this subpath, it follows that every vertex of $P''$ contains at least one element from $\{1,2\}$. Moreover the neighbor $u_3$ of $w$ on $P''$ is not adjacent to $u_1$, we deduce that $u_3=\{1,j\}$. Similarly, the neighbor $u_4$ of $v$ in $P''$ contains 2 and since $u_4$ is not adjacent to $u_2$, it follows that $u_4=\{2,i\}$. Since $u_4 \cap u_1\ne\emptyset$, this implies $i=j$, which is a contradiction due to $u_2\ne u_3$.  Hence $\{u,v\}$ is not a monophonic set, the final contradiction.  
\end{proof}

In the rest of this section we show that for $r \geq 3$, monophonic number of Kneser graph $K(n,r)$ is 2. Moreover, we show that any two non-adjacent vertices $x,y$ of $K(n,r)$ form a monophonic set. For $r \geq 3$, we first consider odd graphs. The idea of the proof for odd graphs is the following. For arbitrary $u\in V(K(2r+1,r)) \setminus \{x,y\}$ we will construct induced $x,u$- and $y,u$-paths and concatenate them in an induced $x,y$-path that contains $u$. The idea for the construction of $x,u$- and $y,u$-paths is from~\cite{valencia2005diameter}, where authors constructed two types of paths between two vertices $a,b \in V(K(2r+1,r))$ with $|a \cap b|=t$, one of length $2(r-t)$ and the other of length $2t+1.$ We will use these two types of paths and first show that each of them is an induced path.

We introduce the following notation.
For a set $X=\{x_1,\ldots , x_n\}$ and an integer $k$ with $0\leq k\leq n$, we denote $X_{\leq k}:=\{x_1,\ldots ,x_k\}$ and $X_{\geq k}:=\{x_{n-k+1},x_{n-k+2},\ldots , x_n\}$ if $k\geq 1$, $X_{\leq 0}:=\emptyset$ and $X_{\geq 0}:=\emptyset$.

\begin{lemma}\label{l:ValenciaPabon}~\hskip -6pt {\rm (\cite{valencia2005diameter})}
    Let $a,b \in V(K(2r+1,r))$ be two different vertices. If $|a \cap b|=t$, then $d(a,b)=\min{\{2(r-t),2t+1}\}.$  
 \end{lemma}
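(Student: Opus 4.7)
The plan is to prove $d(a,b)\le \min\{2(r-t),2t+1\}$ by exhibiting two explicit walks of those lengths, and then to prove $d(a,b)\ge \min\{2(r-t),2t+1\}$ by bounding how quickly $|v_i\cap b|$ can grow along any walk.

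For notation I would partition $[2r+1]=A\sqcup B\sqcup C\sqcup D$ where $C=a\cap b$, $A=a\setminus b$, $B=b\setminus a$, $D=[2r+1]\setminus(a\cup b)$, with sizes $t$, $r-t$, $r-t$, $t+1$ respectively, and fix enumerations $A=\{a_1,\dots,a_{r-t}\}$, $B=\{b_1,\dots,b_{r-t}\}$, $C=\{c_1,\dots,c_t\}$, $D=\{d_1,\dots,d_{t+1}\}$. For the walk of length $2(r-t)$ I would alternately ``swap'' one element of $A$ for one of $B$ per double step, using all of $D$ in the odd-indexed vertices:
\begin{align*}
v_{2i}&=\{a_{i+1},\dots,a_{r-t}\}\cup\{b_1,\dots,b_i\}\cup C,\\
v_{2i+1}&=\{a_1,\dots,a_i\}\cup\{b_{i+2},\dots,b_{r-t}\}\cup D.
\end{align*}
For the walk of length $2t+1$ I would instead keep $A$ and $B$ on opposite sides and shuffle the $c$'s via the $d$'s:
\begin{align*}
v_{2i}&=A\cup\{c_{i+1},\dots,c_t\}\cup\{d_{t+2-i},\dots,d_{t+1}\},\\
v_{2i+1}&=B\cup\{c_1,\dots,c_i\}\cup\{d_1,\dots,d_{t-i}\}.
\end{align*}
In each case $v_0=a$ and the last vertex equals $b$, so the verification is that every $v_j$ has size exactly $r$ and that $v_j\cap v_{j+1}=\emptyset$; both reduce to the disjointness of index ranges like $\{1,\dots,i\}$ vs.\ $\{i+2,\dots,r-t\}$ and are routine.

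For the lower bound I would fix an arbitrary walk $a=v_0,v_1,\dots,v_\ell=b$ and set $x_i=|v_i\cap b|$. Since $v_{i+1}$ is an $r$-subset of $[2r+1]\setminus v_i$ (which has $r+1$ elements) and omits exactly one of them, and since $b\setminus v_i$ has $r-x_i$ elements, one reads off $x_{i+1}\in\{r-x_i-1,\,r-x_i\}$. Let $u_i$ and $l_i$ denote the maximum and minimum of $x_i$ over length-$i$ walks from $a$. The relations $u_{i+1}=r-l_i$ and $l_{i+1}=r-u_i-1$ combine to give $u_{i+2}=u_i+1$, and with $u_0=t$, $u_1=r-t$ this yields $u_{2k}=t+k$ and $u_{2k+1}=r-t+k$. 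The endpoint requirement $x_\ell=r$ forces $u_\ell\ge r$, which reads $\ell\ge 2(r-t)$ when $\ell$ is even and $\ell\ge 2t+1$ when $\ell$ is odd, so $d(a,b)\ge\min\{2(r-t),2t+1\}$.

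The main obstacle is the bookkeeping in the two explicit constructions, especially fixing the $D$-portions so that $|v_j|=r$ is preserved and consecutive vertices remain disjoint at every step. The lower bound, once the $\{r-x_i-1,r-x_i\}$ dichotomy is in hand, is a short computation.
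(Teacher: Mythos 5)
The paper does not prove this lemma at all --- it is imported verbatim from the cited reference on the diameter of Kneser graphs --- so your proposal cannot be matched against an in-paper argument; it has to stand on its own, and it does. Your two explicit walks are exactly the two path constructions that the paper later re-derives and refines in Lemmas~\ref{l:ValenciaPabon-1} and~\ref{l:ValenciaPabon-2} (there the extra work is showing they are \emph{induced}, which you do not need for a distance computation), and the cardinality and disjointness checks you defer are indeed routine: each $v_j$ has size $r$ and consecutive index ranges such as $\{1,\dots,i\}$ versus $\{i+2,\dots,r-t\}$ are disjoint. The lower bound via $x_i=|v_i\cap b|$ and the dichotomy $x_{i+1}\in\{r-x_i-1,\,r-x_i\}$ is correct and is the genuinely new content relative to what the paper records; the parity-dependent conclusion ($\ell\ge 2(r-t)$ for even walks, $\ell\ge 2t+1$ for odd walks) gives the stated minimum. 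One small caveat: defining $u_i$ and $l_i$ as the exact maximum and minimum over walks and asserting $u_{i+1}=r-l_i$, $l_{i+1}=r-u_i-1$ as equalities silently assumes both branches of the dichotomy are always realizable (e.g.\ if $u_i=r$ the value $r-u_i-1=-1$ is not attained). This costs you nothing: only the inequality directions $x_{i+1}\le r-x_i$ and $x_{i+1}\ge r-x_i-1$ are used, so you should simply define $u_i$ and $l_i$ by the recurrences as upper and lower bounds and prove $l_i\le x_i\le u_i$ by induction. With that cosmetic repair the proof is complete.
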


\begin{lemma}\label{l:ValenciaPabon-1}
    Let $a,b \in V(K(2r+1,r))$ with $|a \cap b|=t \geq 1$. Denote $C=a\cap b$, $A=a \setminus b =\{a_1,\ldots,a_{r-t}\}$, $B=b \setminus a =\{b_1,\ldots,b_{r-t}\}$, $D=[2r+1]\setminus(a \cup b)$ and for $i \in [r-t]$ let $$x_{2i-1}= A_{\leq i-1} \cup B_{\geq r-t-i} \cup D$$ and for $i \in \{0\}\cup[r-t]$ let $$x_{2i}=B_{\leq i} \cup A_{\geq r-t-i} \cup C.$$ 
    Then $P:a=x_0,x_1,\ldots , x_{2(r-t)}=b$ is an induced $a,b$-path of length $2(r-t)$.
\end{lemma}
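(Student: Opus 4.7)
The plan is to verify the three conditions that characterize an induced $a,b$-path of length $2(r-t)$: the endpoints match, consecutive vertices on the sequence are adjacent (i.e., disjoint as $r$-sets), and non-consecutive vertices are non-adjacent (i.e., share at least one element).

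First, I would check the boundary cases of the formulas. For $i=0$, the vertex $x_0 = B_{\leq 0} \cup A_{\geq r-t} \cup C = \varnothing \cup A \cup C = a$, and for $i=r-t$, $x_{2(r-t)} = B \cup \varnothing \cup C = b$. A quick size computation shows every $x_j$ has cardinality $r$: indeed $|x_{2i-1}| = (i-1) + (r-t-i) + (t+1) = r$ (using $|D|=t+1$) and $|x_{2i}| = i + (r-t-i) + t = r$.

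Next I would verify adjacency of consecutive vertices. For $x_{2i-1}$ and $x_{2i}$, the parts $A_{\leq i-1}$ and $A_{\geq r-t-i} = \{a_{i+1},\dots,a_{r-t}\}$ are disjoint (indices $\le i-1$ vs.\ $\ge i+1$); likewise $B_{\geq r-t-i}=\{b_{i+1},\dots,b_{r-t}\}$ is disjoint from $B_{\leq i}$; and $C\cap D=\varnothing$ by construction. The same kind of argument handles $x_{2i}$ and $x_{2i+1}$, where now the $A$-parts split around index $i$ and the $B$-parts split around index $i+1$.

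The main (but still routine) work is showing non-adjacency for non-consecutive pairs, and here three cases must be handled; I expect this to be the most finicky part because one must pin down which index ranges guarantee a common element. For two odd-indexed vertices $x_{2i-1}, x_{2j-1}$, both contain $D$, and $|D|=t+1\ge 2>0$ ensures they intersect. For two even-indexed vertices $x_{2i},x_{2j}$, both contain $C$, which is non-empty because $t\ge 1$. The mixed case is the one worth spelling out carefully: for $x_{2i-1}$ and $x_{2j}$ non-consecutive on the path we have $j\ne i$ and $j\ne i-1$. If $j\le i-2$, then $A_{\leq i-1}\cap A_{\geq r-t-j} = \{a_{j+1},\dots,a_{i-1}\}\ne\varnothing$, while if $j\ge i+1$, then $B_{\geq r-t-i}\cap B_{\leq j} = \{b_{i+1},\dots,b_{j}\}\ne\varnothing$. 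In either subcase the vertices share a coordinate in $A$ or $B$, so they are not adjacent in $K(2r+1,r)$.

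Combining these three verifications yields that $P$ is an induced path; since it has exactly $2(r-t)+1$ vertices, its length is $2(r-t)$, matching Lemma~\ref{l:ValenciaPabon}. The only genuine obstacle is managing the index arithmetic in the mixed odd/even case and being careful that the trivial boundary values $A_{\le 0}=A_{\ge 0}=B_{\le 0}=B_{\ge 0}=\varnothing$ do not accidentally spoil the adjacency/non-adjacency count; apart from this bookkeeping, the argument is a direct set-theoretic check.
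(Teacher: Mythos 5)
Your proposal is correct and follows essentially the same route as the paper: verify the endpoints and cardinalities, check that consecutive vertices are disjoint, and establish non-adjacency of non-consecutive vertices by the same parity case analysis (both odd contain $D$, both even contain $C$, and in the mixed case a common element of $A$ or $B$ is exhibited via the index ranges). The only difference is cosmetic: the paper delegates the ``$P$ is a path'' part to the cited construction of Valencia-Pab\'on and Vera, whereas you spell it out.
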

\begin{proof}
    It is straightforward to verify that $P$ is a path (see also the proof of Lemma~\ref{l:ValenciaPabon} from~\cite{valencia2005diameter}) and clearly its length is $2(r-t)$. It remains to show that $P$ is an induced path, that is, for any $j \geq i+2$, $x_i \cap x_j \neq \emptyset$. First, it is clear that $a=A \cup C$ has a non-empty intersection with $x_k$ for any $k \geq 2$, since $x_k$ contains at least one element from $A$. Now let $i,j \in 2[r-t]$, $j \geq i+2.$ If $i$ and $j$ are of the same parity, then clearly $x_i \cap x_j \neq \emptyset$, as both sets either contain $D$ (if $i,j$ are odd) or $C$ (if $i,j$ are even). Thus, let first $i$ be even and $j$ odd. Then $i=2\ell$ for some $\ell \geq 1$ and $j=2(\ell+k)-1$ for some $k\geq 2$, as $j\geq i+2$. Then $a_{\ell+1} \in x_i \cap x_j$. Finally, let $i=2\ell-1$ for some $\ell \geq 1$ and $j=2(\ell+k)$ for some $k \geq 1$. Then $b_{\ell+1} \in x_i \cap x_j$.  
\end{proof}

\begin{lemma}\label{l:ValenciaPabon-2}
    Let $a,b \in V(K(2r+1,r))$ with $|a \cap b|=t$. Denote $C=a\cap b$, $A=a \setminus b$, $B=b \setminus A$, $D=[2r+1]\setminus(a \cup b)$, let $D'\subseteq D$ with $|D'|=t$    
    and for $i \in [t]$ let $$x_{2i-1}= D'_{\leq i-1} \cup C_{\geq t-i} \cup A\cup (D\setminus D')$$ and for $i \in \{0\}\cup[t]$ let $$x_{2i}=C_{\leq i} \cup D'_{\geq t-i} \cup B.$$ 
    Then $P:a,x_0,x_1,\ldots , x_{2t}=b$ is an induced $a,b$-path of length $2t+1$. 
\end{lemma}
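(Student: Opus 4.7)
The plan is to imitate the verification strategy used for Lemma~\ref{l:ValenciaPabon-1}: first show that $P$ is a walk of length $2t+1$ with the correct endpoints (so the successive $r$-sets are pairwise disjoint), and then check that every pair of non-consecutive vertices has non-empty intersection. As a sanity check of the formulas, observe that $x_0 = C_{\leq 0}\cup D'_{\geq t}\cup B = D'\cup B$ and $x_{2t}=C_{\leq t}\cup D'_{\geq 0}\cup B = C\cup B = b$, and that $a=A\cup C$ is disjoint from $x_0 = D'\cup B$ since $D'\subseteq D$ and $A,B,C,D$ are mutually disjoint; hence the extra edge $a x_0$ is present and $P$ does have length $2t+1$. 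The degenerate case $t=0$ reduces to the single edge $a,b$, which is present because $a\cap b = \emptyset$.

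For consecutive adjacency, I would rely on the fact that $A$, $B$, $C$, $D'$ and $D\setminus D'$ are pairwise disjoint, so only the interactions inside $C$ and inside $D'$ matter. For $x_{2i-1}\cap x_{2i}$ the $C$-contribution is $C_{\geq t-i}\cap C_{\leq i}=\{c_{i+1},\dots,c_t\}\cap\{c_1,\dots,c_i\}=\emptyset$, and the $D'$-contribution is $D'_{\leq i-1}\cap D'_{\geq t-i}=\{d'_1,\dots,d'_{i-1}\}\cap\{d'_{i+1},\dots,d'_t\}=\emptyset$. An identical bookkeeping (now with indices shifted by one on the $C$- and $D'$-sides) handles $x_{2i}\cap x_{2i+1}$, so every consecutive pair consists of disjoint $r$-sets.

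For the induced property, I would split into four cases according to the parities of $i<j$ with $j\ge i+2$. If both indices are odd, then $A\subseteq x_i\cap x_j$; if both are even, then $B\subseteq x_i\cap x_j$. If $i=2\alpha-1$ is odd and $j=2\beta$ is even, the gap $j\ge i+2$ forces $\beta\ge\alpha+1$, so $\alpha+1\le\beta\le t$ and consequently $c_{\alpha+1}\in C_{\geq t-\alpha}\cap C_{\leq\beta}\subseteq x_i\cap x_j$. If $i=2\alpha$ is even and $j=2\beta-1$ is odd, the gap forces $\beta\ge\alpha+2$, and then $d'_{\alpha+1}\in D'_{\geq t-\alpha}\cap D'_{\leq\beta-1}\subseteq x_i\cap x_j$. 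Finally, to see that $a$ intersects every $x_j$ with $j\ge 1$, note that $A\subseteq a$ lies in every odd-indexed $x_j$, while for even $j=2\beta\ge 2$ the element $c_1\in C\subseteq a$ lies in $C_{\leq\beta}\subseteq x_j$.

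The main obstacle is not conceptual but purely index-tracking: one must be sure that the distinguished elements $c_{\alpha+1}$ and $d'_{\alpha+1}$ actually exist, i.e.\ that $\alpha+1\le t$ in both mixed-parity cases. This is automatic because the gap condition $j\ge i+2$ already prevents $\alpha$ from reaching the top of its range (the corner values $\alpha=t$ and $\alpha=t-1$ do not admit any legal $j$), so no separate boundary argument is needed. The remaining work is mechanical verification of the disjointness patterns above.
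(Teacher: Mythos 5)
Your proof is correct and follows essentially the same route as the paper's: the same parity case analysis with the same witnesses ($A$ for odd--odd, $B$ for even--even, an element of $C$ for odd--even, an element of $D'$ for even--odd, and $A$ resp.\ $c_1$ to show $a$ meets every $x_j$ with $j\ge 1$), except that you make explicit the index bookkeeping the paper dismisses as ``easily checked''. One tiny inaccuracy: in the odd-$i$/even-$j$ case the value $\alpha=t-1$ \emph{does} admit the legal choice $j=2t$, but since the bound actually needed there is only $\alpha+1\le t$, your conclusion is unaffected.
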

\begin{proof}
    Again, one can verify (possibly using the proof of Lemma~\ref{l:ValenciaPabon} given in~\cite{valencia2005diameter}) that $P$ is a path and clearly its length is $2t+1$. It remains to show that $P$ is an induced path, that is, for any $j \geq i+2$, $x_i \cap x_j \neq \emptyset$ and that $a \cap x_i = \emptyset$ for any $i \geq 1$. Since $A \subseteq x_i$ for any odd $i$ and $c_1 \in x_i \cap a$ for any even $i$, it follows that the only neighbor of $a$ on $P$ is $x_0$. Now let $i,j \in [2t]\cup \{0\}$, $j \geq i+2.$ If $i$ and $j$ are of the same parity, then clearly $x_i \cap x_j \neq \emptyset$, as both sets either contain $A$ (if $i,j$ are odd) or $B$ (if $i,j$ are even). Moreover, it can be easily checked that if $i$ is odd and $j$ is even, then $x_i$ and $x_j$ share an element from $C$ and in the remaining case, when $i$ is even and $j$ is odd, $x_i$ and $x_j$ share an element from $D'$.
\end{proof}

\begin{theorem}\label{thm:odd-strong}
    If $r\geq 3$, then $m(K(2r+1,r))=2$. 
    Moreover, $\{x,y\}$ is a monophonic set of $K(2r+1,r)$ for every pair of non-adjacent vertices $x$ and $y$.
\end{theorem}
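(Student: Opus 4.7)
The plan is to prove the stronger ``moreover'' statement directly; this immediately gives $m(K(2r+1,r))\leq 2$, and $m(G)\geq 2$ is trivial for any non-complete graph. Fix non-adjacent $x,y\in V(K(2r+1,r))$ (so $x\cap y\neq\emptyset$) and an arbitrary $u\in V(K(2r+1,r))\setminus\{x,y\}$; the goal is to exhibit an induced $x,y$-path through $u$. Following the outline in the text, I would build an induced $x,u$-path $P_1$ and an induced $u,y$-path $P_2$ via Lemmas~\ref{l:ValenciaPabon-1} and~\ref{l:ValenciaPabon-2} and concatenate them.

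Let $t_1=|x\cap u|$ and $t_2=|y\cap u|$. I would case-split on $(t_1,t_2)$: if $t_1=t_2=0$ then $x,u,y$ is itself induced (as $x,y$ are non-adjacent); if exactly one of $t_1,t_2$ is zero, say $t_2=0$, then only $P_1$ is nontrivial and the path $P_1$ with $y$ appended is the candidate; if both are positive, both halves are nontrivial. Lemmas~\ref{l:ValenciaPabon-1} and~\ref{l:ValenciaPabon-2} leave the freedom to enumerate the ordered sets $A,B,C,D$ (and $D'$), and the heart of the plan is to use this freedom to pin a designated element to every intermediate vertex of each half: fix $c\in x\cap y$, together with, when available, an element of $x\cap y\cap u$, and place these at the front of the relevant orderings. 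Using the fact that in a Lemma~\ref{l:ValenciaPabon-1} $a,b$-path every even-indexed vertex contains $a\cap b$ and every odd-indexed vertex contains $[2r+1]\setminus(a\cup b)$, this persistence forces intersections across the two halves in each parity pattern.

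The main obstacle is checking that the concatenation is induced, which splits into three subchecks: $P_1$ avoids $y$, $P_2$ avoids $x$, and no internal vertex of $P_1$ is adjacent to any internal vertex of $P_2$. The third is the bulk of the work and further splits by the parity of indices on the two halves. Even/even pairs share the designated element of $x\cap y$ (or of $x\cap y\cap u$); odd/odd pairs share a chosen element of $[2r+1]\setminus(x\cup y\cup u)$; mixed parities are handled by elements of $B\cap x$ or $A\cap y$ placed at the front of the corresponding orderings. The subcase I expect to be the most delicate is the one where $x\cap y\cap u=\emptyset$ and both $t_1,t_2$ are close to $r-1$, as then $[2r+1]\setminus(x\cup y\cup u)$ is small and the odd/odd intersection trick runs out of room; for that regime I would switch the relevant half to the Lemma~\ref{l:ValenciaPabon-2} path of length $2t+1$, whose internal vertices contain $A$ (resp.\ $B$) in full and can therefore be matched against the other half through an element of $x\cap u$ or $y\cap u$. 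The remaining case analysis is finite and each case reduces to a check against the explicit formulas of the two lemmas.
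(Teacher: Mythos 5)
Your proposal follows essentially the same route as the paper: decompose via the Venn diagram of $x,y,u$, build induced $x,u$- and $u,y$-halves from Lemmas~\ref{l:ValenciaPabon-1} and~\ref{l:ValenciaPabon-2}, exploit the freedom in ordering the sets to pin shared elements across the two halves according to parity, and switch to the length-$(2t+1)$ paths of Lemma~\ref{l:ValenciaPabon-2} precisely in the regime where $x\cap y\cap u=\emptyset$ and $[2r+1]\setminus(x\cup y\cup u)$ is too small --- which is exactly the paper's case split. The outline is sound, but note that the deferred ``finite check'' is where the paper spends several pages of delicate verification, including one configuration you do not explicitly flag, namely $x\setminus(y\cup u)=y\setminus(x\cup u)=\emptyset$, which the paper also handles by switching both halves to the Lemma~\ref{l:ValenciaPabon-2} paths.
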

\begin{proof}
    Let $x$ and $y$ be two non-adjacent vertices of $K(2r+1,r)$. We will show that $S=\{x,y\}$ is a monophonic set of $K(2r+1,r)$. Thus, let $u$ be an arbitrary vertex in $V(K(2r+1,r))\setminus S$. We will find an induced $x,y$-path that contains vertex $u$.
    If $u\cap(x \cup y)=\emptyset$, then clearly $x,u,y$ is the shortest (and thus induced) $x,y$-path in $K(2r+1,r)$ that contains $u$, and we are done. So, we may assume without loss of generality that $u\cap x\neq\emptyset$, that is, $x$ and $u$ are not adjacent. 
    
    Let us define $X=x\setminus(y\cup u)$, $Y=y\setminus(x\cup u)$, $U=u\setminus(x\cup y)$, $D=x\cap y\cap u$, $A=(x\cap y)\setminus u$, $B=(y\cap u)\setminus x$, $C=(x\cap u)\setminus y$ and $Z=[2r+1]\setminus(x\cup y\cup u)$ (see Fig.~\ref{fig:venn-xyu}). Together, these sets partition $[2r+1]$. Let $t=|u\cap x|=|C|+|D|$ and $s=|u\cap y|=|B|+|D|$.

    \begin{figure}[ht]
        \centering
        \begin{tikzpicture}            
            \draw (150:1) circle (1.5);
            \draw (30:1) circle (1.5);
            \draw (270:1) circle (1.5);

            \node at (150:1.5) {$X$};
            \node at (30:1.5) {$Y$};
            \node at (270:1.5) {$U$};
            
            \node at (0,0) {$D$};
            
            \node at (90:1) {$A$};
            \node at (330:1) {$B$};
            \node at (210:1) {$C$};
            
            \node at (2,-1.5) {$Z$};

            \node at (150:2.75) {$\boldsymbol{x}$};
            \node at (30:2.75) {$\boldsymbol{y}$};
            \node at (270:2.75) {$\boldsymbol{u}$};
        \end{tikzpicture}
        \caption{Venn diagram on the sets $x$, $y$ and $u$.}
        \label{fig:venn-xyu}
    \end{figure}
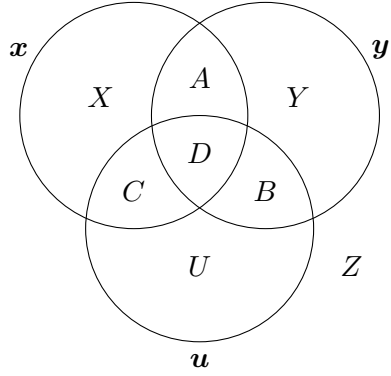
    
    Notice that since $xy \notin E(K(2r+1,r))$ and $xu \notin E(K(2r+1,r))$, we have $|A|+|D|\geq 1$ and $t=|C|+|D|\geq 1$. Besides,
    \begin{equation}\label{eq:cardinality Z}
        |Z|=(2r+1)-(\underbrace{|x|}_{=r}+\underbrace{|Y|+|B|}_{=r-(|x\cap y|)}+\,|U|)=1+\underbrace{|x\cap y|}_{=|A|+|D|}- \, |U|.
    \end{equation}
    Similarly,
    \begin{equation}\label{eq:cardinality Z-2}
        |Z|=1+t-|Y|=1+s-|X|.
    \end{equation}

    We will distinguish different cases.
    \begin{description}
        \item[Case 1.] $y$ and $u$ are adjacent. This is, $y\cap u=\emptyset$. Hence $B=D=\emptyset$.
        
        Let $X'=x\setminus u=X\cup A=\{x_1,\ldots,x_{r-t}\}$, where $x_i\in X$ if and only if $i\leq |X|$.
        
        For $i \in [r-t]$, let
        $$w_{2i-1}=X'_{\leq i-1} \cup U_{\geq r-t-i} \cup (Y\cup Z),$$
        and for $i \in \{0\}\cup[r-t]$, let
        $$w_{2i}=U_{\leq i} \cup X'_{\geq r-t-i} \cup C.$$
        Then considering $a:=x=X'\cup C$, $b:=U\cup C$ and $D:=Y\cup Z$,  Lemma~\ref{l:ValenciaPabon-1}  implies that the path $x=w_0,w_1,\ldots,w_{2(r-t)}=u$ is an induced $x,u$-path.

        We will show that $x=w_0,w_1,\ldots,w_{2(r-t)}=u,y$ is also induced. To do so, we will prove that $y$ is not equal or adjacent to $w_k$ for any $k\in [2(r-t)] \cup \{0\}$. 
        Recall that $y=Y\cup A$, with $Y=y\setminus x\neq\emptyset$ (otherwise $x=y$) and $A=x\cap y\neq\emptyset$ (otherwise $xy \in E(K(2r+1,r))$).
        
        Let $i\in [r-t]$.
        Note that $y\neq w_{2i}$ since $y\setminus w_{2i}\supseteq Y\neq\emptyset$. Further $x_{r-t} \in A \subseteq y$ and $x_{r-t} \notin w_{2i-1}$, since the element $x_{r-t}$ with the largest index in $X'$ is never in $w_{2i-1}$. Hence $y\neq w_{2i-1}$. 
        On the other hand, $y$ is not adjacent to $w_{2i-1}$ since $y\cap w_{2i-1}\supseteq Y\neq\emptyset$. And since $x_{r-t}\in A\cap w_{2i}$ for each $i<r-t$, the only possible neighbor of $y$ is $w_{2(r-t)}=u$ which is adjacent to $y$.

        Therefore, $x=w_0,w_1,\ldots,w_{2(r-t)}=u,y$ is an induced $x,y$-path (of length $2(r-t)+1$) that contains $u$.

        \item[Case 2.] $y\cap u\neq \emptyset$ and $D=Z=\emptyset$. 
        We have $x=X\cup A\cup C$, $y=Y\cup A\cup B$, and $u=U\cup B\cup C$.       
        Notice that from (\ref{eq:cardinality Z}) and (\ref{eq:cardinality Z-2}), since $|D|=|Z|=0$, it follows that $|U|=|A|+1$, $|X|=s+1$ and $|Y|=t+1$. Note that $|C|=t\geq 1$ and $|B|=s\geq 1$ since $u$ intersects $x$ and $y$. Further, $|A|\geq 1$ since $x\cap y\neq \emptyset$.
        Let $B=\{b_1,\ldots,b_s\}$, $X=\{x_1,\ldots,x_{s+1}\}$, $C=\{c_1,\ldots,c_t\}$ and $Y=\{y_1,\ldots,y_{t+1}\}$.

        For $i \in [t]$, let
        $$w_{2i-1}=(Y-\{y_1\})_{\leq i-1} \cup C_{\geq t-i} \cup (X\cup A\cup \{y_1\}),$$ 
        and for $i \in \{0\}\cup[t]$, let
        $$w_{2i}=C_{\leq i} \cup (Y-\{y_1\})_{\geq t-i} \cup (U\cup B).$$
        Considering $a:=x=(X\cup A)\cup C$, $b:=u=(U\cup B)\cup C$, $D:=Y$ and $D':=Y-\{y_1\}$, Lemma~\ref{l:ValenciaPabon-2} implies that the path $P:x,w_0,\ldots,w_{2t}=u$ is an induced $x,u$-path.
        
        For $j \in [s]$, let
        $$v_{2j-1}=(X-\{x_1\})_{\leq j-1} \cup B_{\geq s-j} \cup (Y\cup A\cup \{x_1\}),$$ 
        and for $j \in \{0\}\cup[s]$, let
        $$v_{2j}=B_{\leq j} \cup (X-\{x_1\})_{\geq s-j} \cup (U\cup C).$$
        Considering now $a:=y=(Y\cup A)\cup B$, $b:=u=(U\cup C)\cup B$, $D:=X$ and $D':=X-\{x_1\}$, by Lemma~\ref{l:ValenciaPabon-2} the path $Q:y,v_0,\ldots,v_{2s}=u$ is an induced $y,u$-path.

        We shall show that $x,w_0,w_1,\ldots,w_{2t}=u=v_{2s},v_{2s-1},\ldots,v_0,y$ is an induced $x,y$-path.

        Let $w=w_{2i-1}$ for some $i\in [t]$.
        Note that $x_{s+1}\in w\setminus v_{2j-1}$ for each $j\in [s]$, and $U\subseteq v_{2j}\setminus w$ for each $j\in \{0\}\cup [s]$. So, $w\neq v_k$ for each $k\in \{0\}\cup [2s]$. Besides, $X\subseteq w\setminus y$, so $w\neq y$. Therefore $w$ is not contained in the path $Q$.
        On the other hand, $x_1\in w\cap v_{2j-1}$. If $j<s$, then  $x_{s+1}\in w\cap v_{2j}$. If $j=s$, $v_{2j}=v_{2s}=u$ which is adjacent to $w$ if and only if $i=t$, since $P$ is an induced path. Hence $w$ is not adjacent to any vertex $v$ of $Q$ unless $w=w_{2t-1}$ and $v=u$, but $v$ is the neighbor of $w$  in $P$.

        Now, let $w=w_{2i}$ for some $i\in \{0\}\cup [t-1]$. We have $U\subseteq w\setminus v_{2j-1}$ for all $j\in [s]$. Since $i\leq t-1$, we get $c_t\in v_{2j}$ for each $j\in \{0\}\cup [s]$ but $c_t \notin w$. So, $w\neq v_k$ for each $k\in\{0\}\cup [2s]$, and $w\neq y$ since $U\subseteq w\setminus y$. Hence $w$ is not a vertex in the path $Q$. It remains to show that $w$ is not adjacent to any vertex in $Q$. Clearly, $w$ is not adjacent to $v_{2j}$ for any $j\in \{0\}\cup[s]$ since $U\subseteq w\cap v_{2j}$, and $w$ is not adjacent to $v_{2j-1}$ for any $j\in [s]$ since $y_{t+1}\in w\cap v_{2j-1}$. Finally, $w$ is not adjacent to $y$ as $y_{t+1}\in w\cap y$. 

        Note that $w_{2t}=u$, which is in both paths $P$ and $Q$. Hence $w_{2t}$ has exactly one neighbor in each path, $w_{2t-1}$ and $v_{2s-1}$.

        It follows that for every $k\in\{0\}\cup [2t-1]$, $w_k$ neither belongs to $Q$ nor is adjacent to any of its vertices. Analogously, for every $k\in\{0\}\cup [2s-1]$, $v_k$ neither belongs to $P$ nor is adjacent to any of its vertices. In consequence, $x,w_0,w_1,\ldots,w_{2t}=u=v_{2s},v_{2s-1},\ldots,v_0,y$ is an induced $x,y$-path (of length $2(t+s+1)$) which contains $u$.

        \item[Case 3.] $y\cap u\neq \emptyset$, $D=\emptyset$ and $|Z|=1$. 
        Notice that since $|Z|=1$ and $|D|=0$, from (\ref{eq:cardinality Z}) and (\ref{eq:cardinality Z-2}), it follows that $|X|=|B|=s\geq 1$, $|Y|=|C|=t\geq 1$ and $|U|=|A|\geq 1$. Let $X=\{x_1,\ldots, x_s\}$, $Y=\{y_1,\ldots,y_t\}$, $B=\{b_1,\ldots,b_s\}$ and $C=\{c_1,\ldots,c_t\}$.

        For $i \in [t]$, let
        $$w_{2i-1}=Y_{\leq i-1} \cup C_{\geq t-i} \cup (X\cup A\cup Z),$$ 
        and for $i \in \{0\}\cup[t]$, let
        $$w_{2i}=C_{\leq i} \cup Y_{\geq t-i} \cup (U\cup B).$$
        
        For $j \in [s]$, let
        $$v_{2j-1}=X_{\leq j-1} \cup B_{\geq s-j} \cup (Y\cup A\cup Z),$$ 
        and for $j \in \{0\}\cup[s]$, let
        $$v_{2j}=B_{\leq j} \cup X_{\geq s-j} \cup (U\cup C).$$

        Again by Lemma~\ref{l:ValenciaPabon-2}, the paths $P:x,w_0,w_1,\ldots,w_{2t}=u$ and $Q:y,v_0,v_1,\ldots,v_{2s}=u$ are induced $x,u$-path and $y,u$-path respectively. We aim to show that $x,w_0,w_1,\ldots,w_{2t}=u=v_{2s},v_{2s-1},\ldots,v_0,y$ is an induced $x,y$-path.

        To do so, let us show that for every $k\in\{0\}\cup [2t-1]$, $w_k$ neither belongs to $Q$ nor is adjacent to any of its vertices, and for every $\ell\in\{0\}\cup [2s-1]$, $v_k$ neither belongs to $P$ nor is adjacent to any of its vertices.

        Let $w=w_{2i-1}$ for some $i\in [t]$.
        Note that $y_{t}\in v_{2j-1}\setminus w$ for each $j\in [s]$, and $Z\subseteq w\setminus v_{2j}$ for each $j\in \{0\}\cup [s]$. Thus, $w\neq v_k$ for each $k\in \{0\}\cup [2s]$. Besides, $Z\subseteq w\setminus y$, so $w\neq y$. Therefore $w$ is not contained in the path $Q$. Further, let us see that $w$ is not adjacent to any vertex in $Q$ different from $u$. We have $A\subseteq w\cap v_{2j-1}$, and if $j<s$ (i.e., $v_{2j}\neq u$), then $x_{s}\in w\cap v_{2j}$. If $j=s$, $v_{2j}=v_{2s}=u$ which is adjacent to $w$ if and only if $i=t$, since $P$ is an induced path. Therefore, if $i<t$, $w_{2i-1}$ is not adjacent to any vertex of $Q$, and the only neighbor of $w_{2t-1}$ in $Q$ is $u$, which is its only neighbor in $P$.

        Let $w=w_{2i}$ for some $i\in \{0\}\cup [t-1]$. It holds $Z\subseteq v_{2j-1}\setminus w$ for all $j\in [s]$. And for $i\leq t-1$, $y_t\in w\setminus v_{2j}$. So, $w\neq v_k$ for each $k\in\{0\}\cup [2s]$, and $w\neq y$ since $A\subseteq y\setminus w$. Hence $w$ is not a vertex in the path $Q$. 
        We shall see that $w$ is not adjacent to any vertex in $Q$. Note that $y_t\in w\cap v_{2j-1}$ for every $j\in\cup[s]$. Moreover, $U\subseteq w\cap v_{2j}$ for each $j\in \{0\}\cup [s]$. Finally, $w$ is not adjacent to $y$ as $B\subseteq w\cap y$. Therefore, $w$ is not adjacent to any vertex of $Q$.  

        At last, note that $w_{2t}=u$ is present in both paths $P$ and $Q$, and $u$ has exactly one neighbor in each path, $w_{2t-1}$ and $v_{2s-1}$.

        It follows that $x,w_0,w_1,\ldots,w_{2t}=u=v_{2s},v_{2s-1},\ldots,v_0,y$ is an induced $x,y$-path (of length $2(t+s+1)$) which contains $u$.

        \item[Case 4.] $y\cap u\neq \emptyset$ and $X=Y=\emptyset$. 
        We have $x=A\cup C\cup D$, $y=A\cup B\cup D$, and $u=U\cup B\cup C\cup D$. Since $|x|=|y|=r$, it follows that $|B|=|C|=r-|x\cap y|$ and $s=t$.
        Notice that $|Z|=(2r+1)-(|u|+|A|)=r+1-|A|$. 
        Besides, $A\neq\emptyset$ since otherwise $x=C\cup D=u\cap x$, and as $|u|=r=|x|$, we would have $u=x$, which does not hold. Since $|A|=r-|x \cap u|=r-t$, we get $|Z|=t+1 \geq 2$. Let $Z=\{z_1,z_2,\ldots, z_{t+1}\}$.

        Let $B'=y \cap u=B\cup D=\{b_1,\ldots,b_t\}$, where $b_i\in D$ if and only if $i\geq |B|+1$, and let $C'=x \cap u=C\cup D=\{c_1,\ldots,c_t\}$, with $c_i=b_i$ if and only if $i\geq |C|+1=|B|+1$. 

        For $i \in [t]$, let
        $$w_{2i-1}=(Z-\{z_t\})_{\leq i-1} \cup C'_{\geq t-i} \cup (A\cup \{z_t\}),$$ 
        and for $i \in \{0\}\cup[t]$, let
        $$w_{2i}=C'_{\leq i} \cup (Z-\{z_t\})_{\geq t-i} \cup (B\cup U).$$

        For $j \in [t]$, let
        $$v_{2j-1}=(Z-\{z_{t+1}\})_{\leq j-1} \cup B'_{\geq t-j} \cup (A\cup \{z_{t+1}\}),$$ 
        and for $j \in \{0\}\cup[t]$, let
        $$v_{2j}=B'_{\leq j} \cup (Z-\{z_{t+1}\})_{\geq t-j} \cup (C\cup U).$$

        We claim that $x,w_0,w_1,\ldots,w_{2t}=u=v_{2t},v_{2t-1},\ldots,v_0,y$ is an induced $x,y$-path.

        Again, using Lemma~\ref{l:ValenciaPabon-2}, we can deduce that the path $P:x,w_0,\ldots,w_{2t}=u$ is an induced $x,u$-path and the path $Q:y,v_0,\ldots,v_{2t}=u$ is an induced $y,u$-path.

        Let $w=w_{2i-1}$ for some $i\in [t]$.
        Note that $z_{t}\in w\setminus v_{2j-1}$ for each $j\in [t]$, and $A \subseteq w$ but $A \cap v_{2j} = \emptyset$ for each $j\in \{0\}\cup [t]$. So, $w\neq v_k$ for each $k\in \{0\}\cup [2t]$. Besides, $z_t \in w\setminus y$, so $w\neq y$. Therefore $w$ is not contained in the path $Q$.
        On the other hand, $A\subseteq w\cap v_{2j-1}$ for any $j \in [t]$. If $j<t$, then  $z_t\in w\cap v_{2j}$. If $j=t$, then $v_{2t}=u$ that is clearly adjacent to $w$ only when $i=t$.

        Now, let $w=w_{2i}$ for some $i\in \{0\}\cup [t-1]$. We have $A\subseteq v_{2j-1}$ for all $j\in [t]$, but $A \cap w = \emptyset$ and consequently $w \neq v_{2j-1}$. In addition, as $i\leq t-1$, $z_{t+1}\in w \setminus v_{2j}$ for each $j\in \{0\}\cup [t]$, and thus $w\neq v_{2j}$. Moreover, $w\neq y$ since $A \subseteq y$ and $w \cap A=\emptyset$. Hence $w$ is not a vertex in the path $Q$. It remains to show that $w$ is not adjacent to any vertex in $Q$. Clearly, $w$ is not adjacent to $v_{2j-1}$ for any $j\in [t]$ since $z_{t+1}\in w\cap v_{2j-1}$. It is also clear that $w$ is not adjacent to $y$ as $B \subseteq w\cap y$. On the other hand, $w$ is not adjacent to $v_{2j}$ for any $j\in [t]$ since $b_1 \in w\cap v_{2j}$. Note that $b_1 \in y \cap w$, and thus $w$ is not adjacent to $y$. Now let $j=0$. Then $z_{t-1} \in v_0 \cap w$ unless $i=t-1$ or equivalently $w=w_{2(t-1)}$. In this case $c_1 \in w \cap v_0$.         

        Note that $w_{2t}=u$, which lies in both paths $P$ and $Q$. Hence, $w_{2t}$ has exactly one neighbor in each path, $w_{2t-1}$ and $v_{2t-1}$. 

        Therefore, $x,w_0,w_1,\ldots,w_{2t}=u=v_{2s},v_{2s-1},\ldots,v_0,y$ is an induced $x,y$-path (of length $2(t+s+1)$) which contains $u$.

        \item[Case 5.] $y\cap u\neq \emptyset$, $|X|+|Y|\geq 1$, $|D|+|Z|\geq 1$ and $(|D|,|Z|)\neq (0,1)$.
        
        Let $X'=x\setminus u=A\cup X=\{x_1,\ldots,x_{r-t}\}$, $Y'=y\setminus u=A\cup Y=\{y_1,\ldots,y_{r-s}\}$, with $y_i=x_i$ for each $i\leq |A|$, and let $B'=u\setminus x=U\cup B=\{b_1,\ldots,b_{r-t}\}$, $C'=u\setminus y=U\cup C=\{c_1,\ldots,c_{r-s}\}$, with $c_{i}=b_{i}$ for each $i\leq |U|$.

        For $i \in [r-t]$, let
        $$w_{2i-1}=X'_{\leq i-1} \cup B'_{\geq r-t-i} \cup (Y\cup Z),$$
        and for $i \in \{0\}\cup[r-t]$, let
        $$w_{2i}=B'_{\leq i} \cup X'_{\geq r-t-i} \cup (C\cup D).$$
        Taking $a:=x=(X\cup A)\cup (C\cup D)$, $b:=u=(U\cup B)\cup (C\cup D)$ and $D:=Y\cup Z$, it follows from Lemma~\ref{l:ValenciaPabon-1} that the path $P:x=w_0,\ldots,w_{2(r-t)}=u$ is an induced $x,u$-path. The path is described on Fig.~\ref{fig:Kneser-Pcase5}.

        For $j \in [r-s]$, let
        $$v_{2j-1}=Y'_{\leq j-1} \cup C'_{\geq r-s-j} \cup (X\cup Z),$$
        and for $j \in \{0\}\cup[r-s]$, let
        $$v_{2j}=C'_{\leq j} \cup Y'_{\geq r-s-j} \cup (B\cup D).$$
        If we now consider $a:=y=(Y\cup A)\cup (B\cup D)$, $b:=u=(U\cup C)\cup (B\cup D)$ and $D:=X\cup Z$, by Lemma~\ref{l:ValenciaPabon-1}, the path $Q:y=v_0,\ldots,v_{2(r-s)}=u$ is an induced $y,u$-path. The path is described on Fig.~\ref{fig:Kneser-Qcase5}.

\begin{figure}[ht]
    \centering
    \begin{tikzpicture}    
    \begin{scope}
        \draw (2,-0.3) -- (2,0.3);
        \draw (6,-0.3) -- (6,0.3);
        \draw (7.5,-0.3) -- (7.5,0.3);
        \draw (-2,-0.3) rectangle (9,0.3);

        \node[left] at (-2.25,0) {$x=w_0:$};
        
        \node at (0,0) {$\{x_1,\ldots,x_{r-t-1},x_{r-t}\}$};
        \node at (4,0) {$\emptyset$};
        \node at (6.75,0) {$\emptyset$};
        \node at (8.25,0) {$C\cup D$};
    \end{scope}
    
    \begin{scope}[shift={(0,-1)}]
        \draw (2,-0.3) -- (2,0.3);
        \draw (6,-0.3) -- (6,0.3);
        \draw (7.5,-0.3) -- (7.5,0.3);
        \draw (-2,-0.3) rectangle (9,0.3);

        \node[left] at (-2.25,0) {$w_1:$};
        
        \node at (0,0) {$\emptyset$};
        \node at (4,0) {$\{b_2,b_3,\ldots,b_{r-t}\}$};
        \node at (6.75,0) {$Y\cup Z$};
        \node at (8.25,0) {$\emptyset$};
    \end{scope}
    
    \begin{scope}[shift={(0,-2)}]
        \draw (2,-0.3) -- (2,0.3);
        \draw (6,-0.3) -- (6,0.3);
        \draw (7.5,-0.3) -- (7.5,0.3);
        \draw (-2,-0.3) rectangle (9,0.3);

        \node[left] at (-2.25,0) {$w_2:$};
        
        \node at (0,0) {$\{x_2,\ldots,x_{r-t-1},x_{r-t}\}$};
        \node at (4,0) {$\{b_1\}$};
        \node at (6.75,0) {$\emptyset$};
        \node at (8.25,0) {$C\cup D$};
    \end{scope}

    \begin{scope}[shift={(0,-3)}]
        \draw (2,-0.3) -- (2,0.3);
        \draw (6,-0.3) -- (6,0.3);
        \draw (7.5,-0.3) -- (7.5,0.3);
        \draw (-2,-0.3) rectangle (9,0.3);

        \node[left] at (-2.25,0) {$w_3:$};
        
        \node at (0,0) {$\{x_1\}$};
        \node at (4,0) {$\{b_3,\ldots,b_{r-t}\}$};
        \node at (6.75,0) {$Y\cup Z$};
        \node at (8.25,0) {$\emptyset$};
    \end{scope}
    
    \begin{scope}[shift={(0,-5)}]
        \draw (2,-0.3) -- (2,0.3);
        \draw (6,-0.3) -- (6,0.3);
        \draw (7.5,-0.3) -- (7.5,0.3);
        \draw (-2,-0.3) rectangle (9,0.3);

        \node[left] at (-2.25,0) {$w_{2(r-t)-1}:$};
        
        \node at (0,0) {$\{x_1,\ldots,x_{r-t-1}\}$};
        \node at (4,0) {$\emptyset$};
        \node at (6.75,0) {$Y\cup Z$};
        \node at (8.25,0) {$\emptyset$};
    \end{scope}
    
    \begin{scope}[shift={(0,-6)}]
        \draw (2,-0.3) -- (2,0.3);
        \draw (6,-0.3) -- (6,0.3);
        \draw (7.5,-0.3) -- (7.5,0.3);
        \draw (-2,-0.3) rectangle (9,0.3);

        \node[left] at (-2.25,0) {$u=w_{2(r-t)}:$};
        
        \node at (0,0) {$\emptyset$};
        \node at (4,0) {$\{b_1,b_2,b_3,\ldots,b_{r-t}\}$};
        \node at (6.75,0) {$\emptyset$};
        \node at (8.25,0) {$C\cup D$};
    \end{scope}

    \node[rotate=90] at (3.5,-4) {$\ldots$};
        
    \draw[decoration={brace,mirror,raise=5pt},decorate] (9,-6.35) -- node[right=6pt] {$P$} (9,0.35);
        
    \end{tikzpicture}
    \caption{Induced $x,u$-path $P$ of Case 5.}
    \label{fig:Kneser-Pcase5}
\end{figure}

\begin{figure}[ht]
    \centering
    \begin{tikzpicture}
    
    \begin{scope}
        \draw (2,-0.3) -- (2,0.3);
        \draw (6,-0.3) -- (6,0.3);
        \draw (7.5,-0.3) -- (7.5,0.3);
        \draw (-2,-0.3) rectangle (9,0.3);

        \node[left] at (-2.25,0) {$y=z_0:$};
        
        \node at (0,0) {$\{y_1,\ldots,y_{r-s-1},y_{r-s}\}$};
        \node at (4,0) {$\emptyset$};
        \node at (6.75,0) {$\emptyset$};
        \node at (8.25,0) {$B\cup D$};
    \end{scope}
    
    \begin{scope}[shift={(0,-1)}]
        \draw (2,-0.3) -- (2,0.3);
        \draw (6,-0.3) -- (6,0.3);
        \draw (7.5,-0.3) -- (7.5,0.3);
        \draw (-2,-0.3) rectangle (9,0.3);

        \node[left] at (-2.25,0) {$z_1:$};
        
        \node at (0,0) {$\emptyset$};
        \node at (4,0) {$\{c_2,c_3,\ldots,c_{r-s}\}$};
        \node at (6.75,0) {$X\cup Z$};
        \node at (8.25,0) {$\emptyset$};
    \end{scope}
    
    \begin{scope}[shift={(0,-2)}]
        \draw (2,-0.3) -- (2,0.3);
        \draw (6,-0.3) -- (6,0.3);
        \draw (7.5,-0.3) -- (7.5,0.3);
        \draw (-2,-0.3) rectangle (9,0.3);

        \node[left] at (-2.25,0) {$z_2:$};
        
        \node at (0,0) {$\{y_2,\ldots,y_{r-s-1},y_{r-s}\}$};
        \node at (4,0) {$\{c_1\}$};
        \node at (6.75,0) {$\emptyset$};
        \node at (8.25,0) {$B\cup D$};
    \end{scope}

    \begin{scope}[shift={(0,-3)}]
        \draw (2,-0.3) -- (2,0.3);
        \draw (6,-0.3) -- (6,0.3);
        \draw (7.5,-0.3) -- (7.5,0.3);
        \draw (-2,-0.3) rectangle (9,0.3);

        \node[left] at (-2.25,0) {$z_3:$};
        
        \node at (0,0) {$\{y_1\}$};
        \node at (4,0) {$\{c_3,\ldots,c_{r-s}\}$};
        \node at (6.75,0) {$X\cup Z$};
        \node at (8.25,0) {$\emptyset$};
    \end{scope}
    
    \begin{scope}[shift={(0,-5)}]
        \draw (2,-0.3) -- (2,0.3);
        \draw (6,-0.3) -- (6,0.3);
        \draw (7.5,-0.3) -- (7.5,0.3);
        \draw (-2,-0.3) rectangle (9,0.3);

        \node[left] at (-2.25,0) {$z_{2(r-s)-1}:$};
        
        \node at (0,0) {$\{y_1,\ldots,y_{r-s-1}\}$};
        \node at (4,0) {$\emptyset$};
        \node at (6.75,0) {$X\cup Z$};
        \node at (8.25,0) {$\emptyset$};
    \end{scope}
    
    \begin{scope}[shift={(0,-6)}]
        \draw (2,-0.3) -- (2,0.3);
        \draw (6,-0.3) -- (6,0.3);
        \draw (7.5,-0.3) -- (7.5,0.3);
        \draw (-2,-0.3) rectangle (9,0.3);

        \node[left] at (-2.25,0) {$u=z_{2(r-s)}:$};
        
        \node at (0,0) {$\emptyset$};
        \node at (4,0) {$\{c_1,c_2,c_3,\ldots,c_{r-s}\}$};
        \node at (6.75,0) {$\emptyset$};
        \node at (8.25,0) {$B\cup D$};
    \end{scope}

    \node[rotate=90] at (3.5,-4) {$\ldots$};
        
    \draw[decoration={brace,mirror,raise=5pt},decorate] (9,-6.35) -- node[right=6pt] {$Q$} (9,0.35);
        
    \end{tikzpicture}
    \caption{Induced $y,u$-path $Q$ of Case 5.}
    \label{fig:Kneser-Qcase5}
\end{figure}
        
        We aim to show that $x=w_0,w_1,\ldots,w_{2(r-t)}=u=v_{2(r-s)},v_{2(r-s)-1},\ldots,v_0=y$ is an induced $x,y$-path.
        It suffices to show that for every $k<2(r-t)$ and $\ell<2(r-s)$, $w_k$ is distinct from $v_\ell$ and not adjacent to it.

        \begin{itemize}
            \item Let $k=2i-1$, for some $i\in [r-t]$ and $\ell=2j-1$, for some $j\in [r-s]$.

                If $X\neq \emptyset$, then $x_{r-t}\in X\subseteq v_\ell$, but $x_{r-t}\notin w_k$, since $w_k\cap X'=\{x_1,\ldots,x_{i-1}\}\subseteq X'-\{x_{r-t}\}$. Thus, $w_k\neq v_\ell$. If $X=\emptyset$, then $Y\neq \emptyset$ and with a similar reasoning, $y_{r-s}\in w_k\setminus v_\ell$. Hence $w_k\neq v_\ell$.
        
                If $Z\neq \emptyset$, then $w_k\cap v_\ell\supseteq Z$, and $w_k$ is not adjacent to $v_\ell$. 
                If $Z=\emptyset$, then $D\neq\emptyset$ as $|Z|+|D|\geq 1$. From (\ref{eq:cardinality Z}), it follows that $|U|=|A|+|D|+1>|A|+1$. 
                
                If $i=1$, $w_k=B'-\{b_1\}\cup Y$. Note that $v_\ell\cap Y=\emptyset$ only if $j-1\leq |A|$. Besides, $v_\ell\cap (U-\{b_1\})=\emptyset$ only if $r-s-j\leq |C|$, i.e., $j\geq r-s-|C|=|U|$. Thus, if $w_k$ and $v_\ell$ are adjacent, then $|U|\leq j\leq |A|+1$, but $|U|>|A|+1$. Thus, $w_k$ and $v_\ell$ are not adjacent. 
                Now let $i > 1$. If $j=1$, from a similar reasoning, it follows that $w_k$ is not adjacent to $v_\ell=v_1$. Finally, if $i>1$ and $j>1$, then $x_1\in w_k\cap v_\ell$ either if $x_1\in A$ or if $x_1\in X$ (i.e., if $A=\emptyset$). In any case, it turns out that $w_k$ and $v_\ell$ are not adjacent.
            
            \item Let $k=2i-1$, for some $i\in [r-t]$ and $\ell=2j$, for some $j\in \{0\}\cup [r-s-1]$.

                Note that $|D|+|Z|\geq 1$. So $D$ and $Z$ are not simultaneously empty. Thus, $w_k\neq v_\ell$ since $D\subseteq v_\ell\setminus w_k$ and $Z\subseteq w_k\setminus v_\ell$.
        
                If $Y\neq \emptyset$, then we have that $y_{r-s}\in v_\ell\cap Y\subseteq v_\ell\cap w_k$. Thus, $w_k$ and $v_\ell$ are not adjacent. 
                Otherwise, note that both $A$ and $B$ must be non-empty since $A\cup D\subsetneq y=A\cup B\cup D$ and $B\cup D\subsetneq y=A\cup B\cup D$. Besides, $X\neq \emptyset$ since $|X|+|Y|\geq 1$. 
                
                Thus, if $i<r-t$, $b_{r-t}\in B\cap w_k\subseteq v_\ell\cap w_k$. 
                If $i=r-t$, then $X'\cap w_k = \{x_1,\ldots,x_{r-t-1}\}\supseteq A$ and $y_{r-s}\in A\cap v_\ell$. It follows that $w_k\cap v_\ell\neq \emptyset$. In either case, $w_k$ is not adjacent to $v_\ell$.
            
            \item Let $k=2i$, for some $i\in \{0\}\cup [r-t-1]$ and $\ell=2j-1$, for some $j\in [r-s]$.

                Analogously to what was done for $w_{2i-1}$ and $v_{2j}$, it follows that $w_{2i}$ and $v_{2j-1}$ are neither equal nor adjacent.
            
            \item Let $k=2i$, for some $i\in \{0\}\cup [r-t-1]$ and $\ell=2j$, for some $j\in \{0\}\cup [r-s-1]$.

                If $Y\neq \emptyset$, then we have that $y_{r-s}\in v_\ell \cap Y$. Thus, $y_{r-s}\in v_\ell\setminus w_k$ since $w_k\cap Y=\emptyset$. If $Y=\emptyset$, then $X\neq\emptyset$ since $|X|+|Y|\geq 1$, and using the same reasoning it follows that $x_{r-t}\in w_k\setminus v_\ell$. In any case, $w_k\neq v_\ell$.

                If $D\neq \emptyset$, then $D\subseteq w_k\cap v_\ell$. Thus, $w_k$ and $v_\ell$ are not adjacent.
                If $D= \emptyset$, then $B$ and $C$ are not empty since $B=y\cap u\neq \emptyset$ and $C=x\cap u\neq \emptyset$. Besides, $|Z|\geq 2$ since $|D|+|Z|\geq 1$ and $(|D|,|Z|)\neq (0,1)$. Hence from (\ref{eq:cardinality Z}) it follows that $|A|\geq |U|+1$.
                
                If $i=0$, then $w_k=x=A\cup X\cup C$. Note that $v_\ell\cap C=\emptyset$ only if $j\leq |U|$. Besides, $v_\ell\cap A=\emptyset$ only if $r-s-j\leq |Y|$, i.e., $j\geq r-s-|Y|=|A|$. Hence if $w_k\cap v_\ell=\emptyset$, then $|A|\leq j\leq |U|$. But we arise to a contradiction since $|A|\geq |U|+1$. Therefore, $w_k$ and $v_\ell$ are not adjacent. 
                Now let $i > 0$. If $j=0$, with a similar reasoning, it follows that $w_k$ and $v_0=y$ are not adjacent. Finally, if $i\geq 1$ and $j\geq 1$, then $b_1\in w_k\cap v_\ell$, either if $b_1\in U$ or if $b_1\in B$ (i.e., $U=\emptyset$). In any case, $w_k$ and $v_\ell$ are not adjacent.
        \end{itemize}
         \end{description}
\end{proof}

The following corollary of Theorem~\ref{thm:odd-strong} is the main result of this section.

\begin{corollary}\label{cor:monophonic=2}
    If $r \geq 3$ and $n \geq 2r+1$, then $m(K(n,r))=2$. Moreover, $\{x,y\}$ is a monophonic set of $K(2r+1,r)$ for every pair of non-adjacent vertices $x$ and $y$.
\end{corollary}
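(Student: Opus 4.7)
The plan is to deduce the corollary directly by combining the two previous results in this section, with essentially no additional work. The ``moreover'' clause of the corollary concerns only $K(2r+1,r)$, and is therefore literally the statement of Theorem~\ref{thm:odd-strong}; I would simply invoke it.

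For the equality $m(K(n,r))=2$ for every $n\geq 2r+1$, I would iterate Theorem~\ref{thm:monotonicity}. The point is that Theorem~\ref{thm:monotonicity} says more than $m(K(n+1,r))\leq m(K(n,r))$: it asserts that any monophonic set of $K(n,r)$ remains a monophonic set in $K(n+1,r)$ (using the canonical inclusion $V(K(n,r))\subseteq V(K(n+1,r))$). So if I pick two non-adjacent vertices $x,y\in V(K(2r+1,r))$, then by Theorem~\ref{thm:odd-strong} the pair $\{x,y\}$ is already a monophonic set in $K(2r+1,r)$, and by induction on $n$ starting at $n=2r+1$ the same pair $\{x,y\}$ is a monophonic set in $K(n,r)$ for every $n\geq 2r+1$. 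This yields $m(K(n,r))\leq 2$.

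The matching lower bound is immediate: since $r\geq 3$ and $n\geq 2r+1$ give $|V(K(n,r))|=\binom{n}{r}>1$, and since for any vertex $v$ the monophonic interval $J_G(v,v)=\{v\}$ cannot cover the whole vertex set, a singleton is never a monophonic set. Hence $m(K(n,r))\geq 2$, and combined with the upper bound we obtain $m(K(n,r))=2$. There is no genuine obstacle in this argument; the difficult combinatorial content sits entirely inside Theorem~\ref{thm:odd-strong}, and the corollary is just the bookkeeping that lifts its conclusion to all $n\geq 2r+1$ via the monotonicity of Theorem~\ref{thm:monotonicity}.
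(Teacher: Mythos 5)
Your proof is correct and follows essentially the same route as the paper: Theorem~\ref{thm:odd-strong} supplies the base case, Theorem~\ref{thm:monotonicity} lifts a monophonic set of $K(n,r)$ to $K(n+1,r)$, and the lower bound $m\ge 2$ is the trivial observation that a singleton is never monophonic in a non-trivial graph. One caveat: the ``moreover'' clause as printed is evidently a typo for ``$\{x,y\}$ is a monophonic set of $K(n,r)$'' (this is what the paper's own proof establishes and what is later used to conclude that these Kneser graphs are strongly $2$-monophonic), and under that reading your argument needs one additional step that the paper supplies: any two non-adjacent $x,y\in V(K(n,r))$ satisfy $|x\cup y|\le 2r-1$, so after relabelling they may be regarded as non-adjacent vertices of $K(2r+1,r)$, to which Theorem~\ref{thm:odd-strong} and then Theorem~\ref{thm:monotonicity} apply for that particular pair.
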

\begin{proof}
    Let $r\geq 3$. Then Theorem~\ref{thm:odd-strong} implies that $m(K(2r+1,r))=2$ and hence for any $n\geq 2r+1$, $m(K(n,r)) \leq 2$, by Theorem~\ref{thm:monotonicity}. Then $m(K(n,r))=2$, as every non-trivial graph $G$ has monophonic number at least 2. 

    Now let $x,y$ be two arbitrary non-adjacent vertices of $K(n,r)$. Hence $|x \cup y| \leq 2r-1$. Without loss of generality we may assume that $x \cup y \subseteq [2r-1]$. Hence $x,y$ are also non-adjacent vertices of the graph $K(2r+1,r)$ and thus by Theorem~\ref{thm:odd-strong} $\{x,y\}$ is a monophonic set of $K(2r+1,r)$. Therefore, it follows from Theorem~\ref{thm:monotonicity} that $\{x,y\}$ is a monophonic set of $K(n,r)$ for any $n\geq 2r+1$. 
\end{proof}

\section{Strongly 2-monophonic graphs}\label{s:strongly} 

In this section, we prove several properties of strongly 2-monophonic graphs and provide some necessary conditions for a graph to be in this class of graphs. Then, we characterize strongly 2-monophonic graphs within several graph families, such as  Johnson graphs and chordal graphs, and prove that the property of being strongly $2$-monophonic is preserved by the Cartesian product operation unless one of the factors is $P_3$. 

A graph $G$ is \emph{$2$-monophonic} if $m(G)=2$. A 2-monophonic graph is \emph{strongly $2$-monophonic} if $\{x,y\}$ is a monophonic set for every pair of non-adjacent vertices $x,y \in V(G)$. It follows directly from the definition that the only strongly 2-monophonic graphs of order at most 3 are $2K_1,K_2$ and $P_3$, and that a strongly $2$-monophonic graph is connected as soon as its order is greater than $2$.
Recall that Corollary~\ref{cor:monophonic=2} yields that every Kneser graph $K(n,r)$, where $r\ge 3$ and $n\ge 2r+1$ is strongly 2-monophonic.

\subsection{Basic properties of strongly 2-monophonic graphs}

In this subsection, we present several nice properties of strongly 2-monophonic graphs, some of which will be used in the sequel.

First, we recall another concept from the theory of graph convexities. A set $S \subseteq V(G)$ is {\em $m$-convex} if $J_G(u,v) \subseteq S$ for any $u,v \in S$. The {\it $m$-convexity number} of $G$, $c_m(G)$, is the cardinality of a maximum proper $m$-convex set $S \subsetneq V(G)$.

\begin{proposition}\label{prop:convexSets}
    If $G$ is a strongly 2-monophonic graph, then the only proper $m$-convex sets of $G$ are cliques. Moreover, $c_m(G)=\omega(G)$. 
\end{proposition}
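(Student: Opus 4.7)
The plan is to derive both statements from a single observation: the combination of $m$-convexity and strong $2$-monophonicity is incompatible with the presence of a non-adjacent pair inside a proper subset. Concretely, I would take an arbitrary proper $m$-convex set $S\subsetneq V(G)$ and suppose, for contradiction, that $S$ contains two non-adjacent vertices $x,y$. Since $G$ is strongly $2$-monophonic, $\{x,y\}$ is a monophonic set; but with $|S|=2$ the definition collapses to $J_G(x,y)=V(G)$. On the other hand, $m$-convexity of $S$ forces $J_G(x,y)\subseteq S$, and hence $S=V(G)$, contradicting properness. So every proper $m$-convex set is pairwise adjacent, i.e.\ a clique, which proves the first claim and immediately yields $c_m(G)\le \omega(G)$.

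For the reverse inequality $c_m(G)\ge \omega(G)$, I would invoke the standard fact that every clique $K$ of $G$ is $m$-convex: for adjacent $u,v\in K$, every $u,v$-path of length at least $2$ has $uv$ as a chord and is therefore not induced, so $J_G(u,v)=\{u,v\}\subseteq K$. Choosing $K$ to be a maximum clique thus exhibits a proper $m$-convex set of size $\omega(G)$ whenever $G$ is non-complete, which is the only relevant setting (the assumption of being strongly $2$-monophonic guarantees a non-adjacent pair as soon as $n(G)\ge 3$).

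There is no real obstacle beyond this definitional check on cliques; the heart of the argument is the one-line contradiction in the first paragraph, made possible by the fact that strong $2$-monophonicity promotes a two-element monophonic set $\{x,y\}$ all the way up to $J_G(x,y)=V(G)$, so $m$-convexity has nowhere to hide.
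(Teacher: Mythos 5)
Your argument is correct and essentially identical to the paper's: both derive the contradiction by noting that a non-adjacent pair $x,y$ inside a proper $m$-convex set would force $V(G)=J_G(x,y)\subseteq S$, and both obtain $c_m(G)=\omega(G)$ from the fact that cliques are $m$-convex. Your extra remark about the non-complete case is a harmless (indeed slightly more careful) addition that the paper leaves implicit.
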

\begin{proof}
     Suppose that there exists an $m$-convex set $S \subset V(G)$ that is not a clique. Let $a \in V(G) \setminus S$. Since $S$ is not a clique, there exist $x,y \in S$ such that $xy \notin E(G)$. Hence $\{x,y\}$ is a monophonic set of $G$. Thus $a \in J_G(x,y)=V(G)$, which contradicts the fact that $S$ is $m$-convex. Since any clique is $m$-convex, it follows that $c_m(G)=\omega(G)$.
\end{proof}

In the following result, we present several necessary conditions that a strongly $2$-monophonic graph must satisfy.

\begin{proposition}\label{prp:necessary}
If $G$ is a strongly $2$-monophonic graph, then the following properties hold:
\begin{enumerate}[(i)]
\item $G$ has no cut vertices unless $G=P_3$;
\item for any $x\in V(G)$ the set $N[x]$ is not a cut set;
\item if $N(y)\subseteq N(x)$ for some $x,y\in V(G)$, then 
$N[x]=V(G)\setminus\{y\}$;
\item if $N[y]\subseteq N[x]$ for some $x,y\in V(G)$, then $x$ is a universal vertex.
\end{enumerate}
\end{proposition}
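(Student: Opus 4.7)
The plan is to prove each of (i)--(iv) by contradiction, in each case invoking strong $2$-monophonicity on a carefully chosen non-adjacent pair $\{u, w\}$ to force a specific vertex to lie on no induced $u, w$-path. For (i), assume $G \neq P_3$ has a cut vertex $v$, and let $C_1, \ldots, C_k$ (with $k \geq 2$) be the components of $G - v$. The key structural observation is that any component $C_i$ with $|C_i| \geq 2$ induces a clique: indeed, non-adjacent $a, b \in C_i$ would form a monophonic pair by assumption, yet a vertex in another component $C_j$ could only lie on an induced $a, b$-path by forcing the path to cross $v$ twice, which is impossible. Now, assuming some $C_1$ has $|C_1| \geq 2$, fix $c$ in a different component; for any $a \in C_1$, monophonicity of $\{a, c\}$ together with the clique structure of $C_1$ forces every induced $a, c$-path through a vertex $b \in C_1 \setminus \{a\}$ to have the form $a, b, v, \ldots, c$, yielding $av \notin E(G)$ and $bv \in E(G)$ for all $b \in C_1 \setminus \{a\}$. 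Applying this to two distinct choices $a, a'$ in $C_1$ gives the contradictory conclusions $a' \notin N(v)$ and $a' \in N(v)$. Hence all components of $G - v$ are singletons and $G$ is a star; a direct check rules out $K_{1, k}$ for $k \geq 3$ (a third leaf is not on any induced path between two non-adjacent leaves), leaving only $G = P_3$, contradicting the assumption.

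For (ii), suppose $N[x]$ is a cut set with distinct components $D_1, D_2$ of $G - N[x]$, and pick $a \in D_1$ and $c \in D_2$. Both are non-adjacent to $x$, so $\{a, x\}$ is monophonic and $c$ must lie on some induced $a, x$-path $P$. The key point is that on any induced path ending at $x$, only the penultimate vertex can be adjacent to $x$ (any other such vertex would create a chord with the endpoint $x$). Therefore every vertex of $P$ other than the penultimate lies in $V(G) \setminus N[x]$, and since these vertices form a subpath attached to $a \in D_1$, they remain in $D_1$. This contradicts $c \in D_2$ lying on $P$.

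Parts (iii) and (iv) use the same "endpoint chord" principle. For (iii), note first that $x \neq y$ with $N(y) \subseteq N(x)$ forces $xy \notin E(G)$ (otherwise $x \in N(y) \subseteq N(x)$, absurd). Given any $z \in V(G) \setminus \{x, y\}$ with $zx \notin E(G)$, monophonicity of $\{x, z\}$ places $y$ on some induced $x, z$-path $P = x, p_1, \ldots, p_k = z$, with $y = p_i$; both $p_{i-1}$ and $p_{i+1}$ lie in $N(y) \subseteq N(x)$, hence both are adjacent to the endpoint $x$. Since $P$ is induced, $p_1$ is the only vertex of $P$ adjacent to $x$; a short case analysis on $i$ (using $xy \notin E(G)$ to exclude $i = 1$) forces $p_{i+1} = p_k = z$ and hence $z \in N(x)$, contradicting $zx \notin E(G)$. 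So every $z \neq y$ belongs to $N[x]$, establishing $N[x] = V(G) \setminus \{y\}$. For (iv), $N[y] \subseteq N[x]$ with $x \neq y$ forces $xy \in E(G)$ (since $y \in N[y]$), and the analogous argument, now using $N(y) \subseteq N[x]$, forces any supposed non-neighbor $z$ of $x$ to coincide with a vertex of $N(x)$, contradicting $zx \notin E(G)$; hence $x$ is universal.

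The main technical obstacle is the structural analysis in (i): both deducing that non-trivial components of $G - v$ are cliques and then converting the local constraints on induced $a, c$-paths into a uniform global contradiction require care, especially the step of varying $a$ across $C_1$. The remaining parts are conceptually cleaner once the endpoint-chord principle is isolated and applied to a path ending at $x$.
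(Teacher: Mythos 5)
Your proposal is correct and follows essentially the same route as the paper: for (i) you show the components of $G-v$ are cliques and then reduce to singletons and a star, and for (ii)--(iv) you use the observation that on an induced path ending at $x$ only the penultimate vertex may lie in $N(x)$. The only cosmetic difference is in (i), where you derive the contradiction for a non-trivial component by varying the base vertex $a$ (getting $a'\in N(v)$ and $a'\notin N(v)$ simultaneously), whereas the paper fixes a vertex adjacent to $v$ and exhibits a failed monophonic pair directly; both arguments are sound.
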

\begin{proof}
(i)   Let $v$ be a cut vertex of a strongly $2$-monophonic graph $G$, which is not $P_3$, and $H_1,\ldots,H_k$ the connected components of $G-v$. First, assume $H_i$ has two non-adjacent vertices $x$ and $y$ for some $i\in [k]$, and let $u \in V(H_j)$ for $j\in [k]$, $j\neq i$. Since any $x,u$-path contains $v$ and the same holds for any $y,u$-path, there does not exist any $x,y$-path which contains $u$. So, $H_t$ is a clique for every $t\in [k]$. Suppose that $|V(H_i)| \geq 2$ for some $i \in [k]$. Let $x \in V(H_i)$ be a vertex adjacent to $v$  and let $u \in V(H_i) \setminus \{x\}$ and $y \in V(H_j)$ for $j \neq i$. Then any $x,y$-path $P$ that contains $u$ is not induced as $P$ contains $v$ and the edge $vx$. Hence $\{x,y\}$ is not a monophonic set, a contradiction.
Thus, $|V(H_t)|=1$ for each $t\in [k]$.
Since $G$ is not isomorphic to $P_3$, it follows that $k\geq 3$. Considering $x\in V(H_1)$, $y\in V(H_2)$ and $u\in V(H_3)$, the only induced $x,y$-path is $x,v,y$ which clearly does not contain $u$. Hence $\{x,y\}$ is not a monophonic set, a contradiction.   

(ii) Suppose to the contrary that $N[x]$, where $x\in V(G)$, is a cut set in a strongly $2$-monophonic graph $G$. Let $u$ and $y$ lie in different components of $G-N[x]$. Since $G$ is strongly $2$-monophonic and $xy\notin E(G)$, we infer that $\{x,y\}$ is a monophonic set of $G$. However, every $y,x$-path that contains $u$ passes a vertex in $N(x)$, therefore such a path is not induced, a contradiction.  

(iii) Let $G$ be a strongly $2$-monophonic graph and $x,y\in V(G)$ with $N(y)\subseteq N(x)$. Assume that $N(x)\neq V(G)\setminus\{x,y\}$ and let $v\in V(G) \setminus (N[x]\cup\{y\})$. Since $xv\notin E(G)$, it follows that $\{x,v\}$ is a monophonic set of $G$. Since any neighbor of $y$ is also a neighbor of $x$, there does not exist an induced $x,v$-path that contains $y$, a contradiction. Hence $N[x]=V(G)\setminus \{y\}$.

Finally, (iv) can be proved by following arguments similarly as in item (iii).  
\end{proof}

We end this subsection with the following sufficient condition for strong $2$-monophonicity.

\begin{proposition}
\label{prp:sufficient}
If $G$ is a non-complete graph such that for any three vertices one of which is not adjacent to any of the other two there exists an induced cycle passing through all three vertices, then $G$ is strongly $2$-monophonic. 
\end{proposition}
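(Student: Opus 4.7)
The plan is to fix an arbitrary pair of non-adjacent vertices $x,y \in V(G)$ and show that every vertex $u \in V(G)$ lies on some induced $x,y$-path; by definition this establishes that $\{x,y\}$ is a monophonic set. I would structure the argument as a case analysis according to the adjacency pattern of $u$ with the pair $\{x,y\}$.

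Two cases are trivial and I dispose of them first. If $u \in \{x,y\}$, then $u$ lies on every $x,y$-path. If $u \notin \{x,y\}$ and $u$ is adjacent to both $x$ and $y$, then since $xy \notin E(G)$, the path $x,u,y$ is already an induced $x,y$-path containing $u$. The remaining case is $u \notin \{x,y\}$ together with $u$ being non-adjacent to at least one of $x,y$; by symmetry I assume $uy \notin E(G)$.

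The key step is the observation that in the triple $\{x,y,u\}$ some vertex is non-adjacent to the other two, so that the hypothesis of the proposition applies. Indeed, if $ux \notin E(G)$, then $u$ plays this role, being non-adjacent to both $x$ and $y$; otherwise $ux \in E(G)$, and combining $xy \notin E(G)$ with $uy \notin E(G)$ shows that $y$ is non-adjacent to both $x$ and $u$. In either subcase the hypothesis supplies an induced cycle $C$ of $G$ passing through $x,y,u$. Since $C$ is induced, each of its two arcs between $x$ and $y$ is an induced path of $G$, and $u$ lies in exactly one of them; that arc is the required induced $x,y$-path through $u$.

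The only real obstacle I anticipate is making sure that the hypothesis is genuinely applicable in the nontrivial case, i.e.\ verifying that some vertex of $\{x,y,u\}$ is non-adjacent to the other two; this is handled by the brief combinatorial observation above. Once that is settled, decomposing the provided induced cycle into its two $x,y$-arcs yields the desired induced path essentially for free, and the proof is complete.
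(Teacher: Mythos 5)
Your proof is correct and follows essentially the same route as the paper's: dispose of the case $u\in N(x)\cap N(y)$, identify which vertex of the triple $\{x,y,u\}$ is non-adjacent to the other two so the hypothesis applies, and extract the $x,y$-arc of the resulting induced cycle that contains $u$. Your explicit subcase analysis of whether $u$ or $y$ plays that role is just a slightly more spelled-out version of the paper's ``without loss of generality assume $ux\notin E(G)$.''
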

\begin{proof}
    Let $x,y$ be non-adjacent vertices of $G$, and $u\in V(G)\setminus\{x,y\}$. If $u\in N(x)\cap N(y)$, then clearly $u\in J_G(x,y)$. Thus, without loss of generality assume $ux\notin E(G)$. Since $x$ is not adjacent to either $y$ and $u$, there exists an induced cycle $C$ passing through $x$, $y$ and $u$. The $x,y$-subpath of $C$ containing $u$ is an induced $x,y$-path, so $u\in J_G(x,y)$.
\end{proof}

To see that the condition in Proposition~\ref{prp:sufficient} is not also necessary for a graph to be strongly $2$-monophonic, consider the graph $G$ (isomorphic to $K_3\Box K_2$) depicted on Fig.~\ref{fig:hamming}. The three shaded vertices have the property that one of them is at distance $2$ from the other two, yet the three vertices do not lie on an induced cycle. On the other hand, $G$ is strongly $2$-monophonic. 

\begin{figure}[htb]
\begin{center}
\begin{tikzpicture}[scale=.9,style=thick,x=1cm,y=1cm]
\def\vr{3.25pt}

\path (0,0) coordinate (v1);
\path (2,0) coordinate (v2);
\path (4,0) coordinate (v3);
\path (0,2) coordinate (v4);
\path (2,2) coordinate (v5);
\path (4,2) coordinate (v6);

\draw (v1)--(v2)--(v3)--(v6)--(v5)--(v4);
\draw (v1)--(v4); 
\draw  (v2)--(v5);
\draw (v6) to[out=90,in=90, distance=1cm] (v4);
\draw (v3) to[out=90,in=90, distance=-1cm] (v1);

\draw (v1) [fill=black] circle (\vr);
\draw (v2) [fill=white] circle (\vr);
\draw (v3) [fill=white] circle (\vr);
\draw (v4) [fill=white] circle (\vr);
\draw (v5) [fill=black] circle (\vr);
\draw (v6) [fill=black] circle (\vr);

\end{tikzpicture}
\caption{A strongly $2$-monophonic graph; the three shaded vertices do not belong to an induced cycle.}
\label{fig:hamming}
\end{center}
\end{figure}
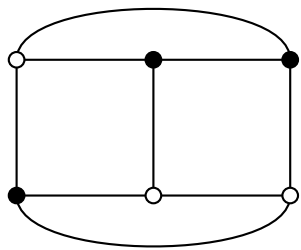

\subsection{Chordal graphs and dismantlable graphs}
\label{ss:chordal}
In this subsection, we study two famous classes of graphs. We prove that with some very special exceptions chordal graphs are not strongly $2$-monophonic. On the other hand, from any strongly $2$-monophonic graph one can create a dismantlable strongly $2$-monophonic graph by adding a universal vertex. 

A graph is \emph{chordal} if it contains no induced cycle of length greater than 3. A vertex $v$ of a graph $G$ is called \emph{simplicial} if the subgraph of $G$ induced by $N[v]$ is a complete graph. Chordal graphs have many interesting properties. It is well-known that every minimal separator in a chordal graph is a clique. Moreover, every chordal graph that is not a complete graph has two non-adjacent simplicial vertices~\cite{dirac1961rigid}.

A vertex $x$ of an $m$-convex set $S$ in a graph $G$ is {\em $m$-extreme} if $S-\{x\}$ is also $m$-convex.  It is well-known that $m$-extreme vertices are exactly simplicial vertices~\cite{FJ}. Since by definition, $m$-extreme vertex $u$ does not lie on $J_G(x,y)$ for any $x,y \in S\setminus \{u\}$, it holds that the set of $m$-extreme vertices of $G$ is contained in any monophonic set $S$. This implies the following.

\begin{remark}\label{rem:2-monophonic}
    If $G$ is 2-monophonic graph, then $G$ has at most two simplicial vertices.
\end{remark}

\begin{proposition}\label{prp:S2M-1}
    If $G$ is a strongly 2-monophonic graph having two non-adjacent simplicial vertices $x,y$, then ${\rm{deg}}(z)=n(G)-1$ for any $z \in V(G)\setminus \{x,y\}$. In particular, $G-\{x,y\}$ is a clique.
\end{proposition}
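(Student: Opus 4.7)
The plan is to take an arbitrary $z\in V(G)\setminus\{x,y\}$ and argue in two stages: first, $z$ must be a common neighbor of $x$ and $y$; second, since $x$ is simplicial, $V(G)\setminus\{x,y\}$ is automatically a clique.

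To prove that $z\in N(x)$, I will argue by contradiction. Assume $xz\notin E(G)$. Since $G$ is strongly $2$-monophonic, $\{x,z\}$ is a monophonic set, and so $y\in J_G(x,z)$. Thus $y$ lies on some induced $x,z$-path $Q$. Because $y\neq x$ and $y\neq z$, the vertex $y$ is internal on $Q$; let $u,w$ be its two neighbors along $Q$. Then $u,w\in N(y)$, and since $y$ is simplicial, $N(y)$ is a clique, so $uw\in E(G)$. This edge between two non-consecutive vertices of $Q$ contradicts the fact that $Q$ is an induced path. Hence $z\in N(x)$. By an entirely symmetric argument (using that $y$ is simplicial and $\{y,z\}$ would be a monophonic set, whose induced $y,z$-paths cannot pass through the simplicial vertex $x$ as an internal vertex), we also get $z\in N(y)$.

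Having established $V(G)\setminus\{x,y\}\subseteq N(x)$, the simpliciality of $x$ forces $N(x)$ to induce a complete subgraph of $G$, so $G-\{x,y\}$ is a clique. Then for any $z\in V(G)\setminus\{x,y\}$, the vertex $z$ is adjacent to the $n(G)-3$ other vertices of $V(G)\setminus\{x,y,z\}$ and to both $x$ and $y$, yielding $\deg(z)=n(G)-1$.

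The only subtle point is the observation that a simplicial vertex cannot appear as an internal vertex of an induced path between two non-adjacent endpoints; once this is recorded, everything follows immediately. No deeper structural machinery about Kneser or chordal graphs is required here—just the defining property of strong $2$-monophonicity applied to the pair $\{x,z\}$ (respectively $\{y,z\}$) combined with the clique structure of $N[y]$ (respectively $N[x]$).
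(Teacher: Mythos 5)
Your proof is correct. It rests on the same underlying fact as the paper's argument --- a simplicial vertex cannot occur as an internal vertex of an induced path, since its two neighbours on such a path would be adjacent --- but it deploys that fact differently. The paper first packages it as ``every simplicial ($m$-extreme) vertex lies in every monophonic set'' (Remark~\ref{rem:2-monophonic}), concludes that $\{x,y\}$ is the \emph{unique} monophonic set of cardinality $2$, and then invokes strong $2$-monophonicity once to deduce that $x,y$ form the only non-adjacent pair in $G$. You instead apply strong $2$-monophonicity locally to the pairs $\{x,z\}$ and $\{y,z\}$: if $xz\notin E(G)$ then $y\in J_G(x,z)$ would force an induced path through the simplicial vertex $y$, which is impossible, so every $z\notin\{x,y\}$ is a common neighbour of $x$ and $y$; the clique conclusion then falls out of the simpliciality of $x$. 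Your route is self-contained (it does not need the remark or the notion of $m$-extreme vertices) at the cost of a case-by-case contradiction; the paper's route is shorter given the remark and yields directly the slightly stronger formulation $G=K_{n(G)}-xy$. Both are sound, and the degree count at the end of your argument is accurate.
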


\begin{proof}
    By Remark~\ref{rem:2-monophonic}, $x$ and $y$ are the only simplicial vertices of $G$. Since $m(G)=2$ and since every simplicial vertex is contained in a monophonic set, it follows that $S=\{x,y\}$ is the only monophonic set of $G$ of cardinality 2. Since $G$ is strongly 2-monophonic, $x$ and $y$ are the only two non-adjacent vertices of $G$. 
\end{proof}

\begin{corollary}\label{c:S2Mchordal}
If $G$ is a chordal graph of order at least 3, then $G$ is a strongly $2$-monophonic graph if and only if $G=K_n-e$, for some $n\ge 3$.    
\end{corollary}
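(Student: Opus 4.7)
The proof is short once the groundwork of Proposition~\ref{prp:S2M-1} and Dirac's theorem on simplicial vertices in chordal graphs is available, so the plan is essentially to assemble these two ingredients.

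For the sufficiency direction, I would simply observe that $K_n-e$ is chordal (it contains no cycle of length exceeding $3$, as any induced cycle would have to avoid the missing edge, but with only one missing edge this is impossible for length $\ge 4$). It has precisely one pair of non-adjacent vertices, namely the endpoints $x,y$ of the missing edge $e$; every other vertex $z$ is adjacent to both $x$ and $y$, and $x,z,y$ is an induced $x,y$-path through $z$. Hence $\{x,y\}$ is a monophonic set and $G$ is strongly $2$-monophonic.

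For the necessity direction, let $G$ be a chordal graph of order at least $3$ that is strongly $2$-monophonic. First I would rule out the case $G=K_n$ with $n\ge 3$: every vertex of $K_n$ is simplicial, and since every monophonic set must contain all simplicial vertices (equivalently, all $m$-extreme vertices), we would get $m(K_n)\ge n\ge 3$, contradicting that $G$ is $2$-monophonic. So $G$ is a non-complete chordal graph, and by the classical theorem of Dirac it contains two non-adjacent simplicial vertices $x,y$. Applying Proposition~\ref{prp:S2M-1} to this pair yields that every vertex of $V(G)\setminus\{x,y\}$ has degree $n(G)-1$, so $x$ and $y$ are the unique pair of non-adjacent vertices of $G$. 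Consequently $G=K_{n(G)}-xy$, which finishes the argument.

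I do not expect a genuine obstacle here, since Proposition~\ref{prp:S2M-1} does essentially all the heavy lifting; the only conceptual point worth highlighting is the brief observation that complete graphs are excluded by the extremal/simplicial characterization of minimum monophonic sets via $m$-extreme vertices (recalled just before Remark~\ref{rem:2-monophonic}). The small order case $n(G)=3$ is covered automatically, as $K_3-e=P_3$ is indeed chordal and strongly $2$-monophonic.
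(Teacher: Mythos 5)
Your proof is correct and follows essentially the same route as the paper: Dirac's theorem gives two non-adjacent simplicial vertices in a non-complete chordal graph, and Proposition~\ref{prp:S2M-1} then forces all remaining vertices to be universal, so $G=K_{n(G)}-xy$. The only cosmetic difference is that the paper explicitly invokes Remark~\ref{rem:2-monophonic} to note that $x$ and $y$ are the \emph{only} simplicial vertices before applying the proposition, whereas you pass directly to the proposition; both are valid.
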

\begin{proof}
    If $G$ is isomorphic to $K_n-e$, for $n\geq 3$, then clearly the two vertices $x$ and $y$ of degree $n-2$ form a monophonic set. Moreover, this is the only pair of two non-adjacent vertices, thus $K_n-e$ is a strongly 2-monophonic graph.

    For the converse let $G$ be a strongly 2-monophonic graph of order at least 3. It is clear that $G$ is not a complete graph, as $m(K_n)=n\geq 3$ for any $n \geq 3$. Since $G$ is chordal and not complete, $G$ has at least two non-adjacent simplicial vertices. Hence Remark~\ref{rem:2-monophonic} implies that $G$ has exactly 2 simplicial vertices $x,y$. By Proposition~\ref{prp:S2M-1} $x$ and $y$ are the only two non-adjacent vertices of $G$. Thus $G$ is isomorphic to $K_{n(G)}-xy$.
\end{proof}

A vertex $u$ is {\em dominated by} a vertex $v$ if $N[u]\subseteq N[v]$.
A graph $G$ of order at least $2$ is {\em dismantlable} if there exists a vertex $u$, which is dominated by some other vertex, and $G-u$ is also dismantlable. This property provides an elimination scheme in dismantlable graphs by which dominated vertices are removed one-by-one from the graph until one ends up with the graph $K_1$. As proved by Nowakowski and Winkler~\cite{NR}, dismantlable graphs are precisely cop-win graphs. 

By Proposition~\ref{prp:necessary}(iv), if $G$ is a strongly $2$-monophonic dismantlable graph, then there exists a universal vertex in $G$. 
On the other hand, if $G$ has universal a vertex $v$, then $G$ is strongly $2$-monophonic if and only if $G-v$ is strongly $2$-monophonic. From these two observation, we derive the following observation.
\begin{proposition}
\label{prp:dismant}
If $G$ is a dismantlable non-complete graph, then $G$ is a strongly $2$-monophonic graph if and only if $G$ has a non-empty set $U$ of universal vertices and $G-U$ is a strongly $2$-monophonic graph without universal vertices.    
\end{proposition}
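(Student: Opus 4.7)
The plan is to establish the proposition as a direct consequence of the two observations stated in the preceding paragraph: (A) a strongly $2$-monophonic dismantlable graph contains a universal vertex (which follows from Proposition~\ref{prp:necessary}(iv) applied to the vertex that dominates a dismantling-eliminated vertex), and (B) for any graph $G$ with a universal vertex $v$, $G$ is strongly $2$-monophonic if and only if $G-v$ is.

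For the forward direction, I would fix $G$ strongly $2$-monophonic and take $U\subseteq V(G)$ to be the set of \emph{all} universal vertices of $G$. Non-emptiness of $U$ is immediate from (A). To see that $G-U$ is strongly $2$-monophonic, I would enumerate $U=\{v_1,\ldots,v_k\}$ and delete the $v_i$ one by one: each $v_i$ remains universal in $G-\{v_1,\ldots,v_{i-1}\}$ because universality (being adjacent to every other vertex) is trivially preserved under removal of other vertices, so $k$ successive applications of (B) reduce $G$ to $G-U$ while preserving strong $2$-monophonicity. To verify that $G-U$ has no universal vertex, I would suppose for contradiction that some $w\in V(G)\setminus U$ were universal in $G-U$; then $w$ is adjacent to all vertices of $V(G)\setminus(U\cup\{w\})$, and since every vertex of $U$ is adjacent to $w$ by its universality in $G$, $w$ would be adjacent to every vertex of $V(G)\setminus\{w\}$, forcing $w\in U$, a contradiction.

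For the converse, I would start with a non-empty set $U$ of universal vertices of $G$ such that $G-U$ is strongly $2$-monophonic, and reintroduce the vertices of $U$ one at a time. At each step the vertex being added is universal in the current graph (again because it was universal in $G$, hence is adjacent to every remaining vertex), so iterated application of (B) propagates strong $2$-monophonicity from $G-U$ back up to $G$.

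The main, and essentially only, point requiring attention is the iterated use of (B) in both directions, which relies on the fact that universality is preserved under both deletion and addition of other universal vertices. Since this is immediate, the proposition reduces to a short bookkeeping argument once the two observations are in hand; I do not anticipate any genuine obstacle beyond being careful to verify that the set $U$ one begins with in the forward direction is indeed the \emph{full} set of universal vertices, which is what ensures that $G-U$ ends up with none.
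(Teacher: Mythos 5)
Your proposal is correct and follows essentially the same route as the paper, which derives the proposition directly from the same two observations (existence of a universal vertex via Proposition~\ref{prp:necessary}(iv), and the equivalence of strong $2$-monophonicity for $G$ and $G-v$ when $v$ is universal); the paper leaves the iteration implicit, whereas you spell out the bookkeeping, including the check that $G-U$ retains no universal vertex.
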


\subsection{Cartesian products}

We start the section with the following trivial remark, that will be needed in the proof of the main result of this section.

\begin{remark}\label{r:cartesian}
    Let $G$ and $H$ be connected graphs with $|V(G)| \geq 3, |V(H)|\geq 2$. If $\{g_1,g_2\}$ is a monophonic set of $G$, then for an arbitrary $h \in V(H)$, the set $\{(g_1,h),(g_2,h)\}$ is a monophonic set of $G\Box H$. 
\end{remark}

Remark~\ref{r:cartesian} implies that if $G$ is a 2-monophonic graph of order at least 3, then for any connected graph $H$ it follows that $m(G \Box H)=2$, that is, $G \Box H$ is 2-monophonic. The result cannot be generalized to strongly 2-monophonic graph $G$, as there are examples of strongly 2-monophonic graphs $G$ and $H$ such that $G \Box H$ is not strongly 2-monophonic. For example, $P_3$ is strongly 2-monophonic and Proposition~\ref{prp:necessary}(ii) implies that $P_3 \Box P_3$ is not strongly 2-monophonic. Moreover, the graph $P_3\Box K_2$ is not strongly $2$-monophonic.
In the main result of this section we prove that the statement is true for any connected strongly 2-monophonic graph $G$ different from $P_3$. To prove this we first need the following lemma.

\begin{lemma}\label{l:disjointpaths}
    Let $G$ be a strongly $2$-monophonic graph not isomorphic to $P_3$, and $x,y,z$ three distinct vertices of $G$. Then there exist an induced $x,y$-path $P$ and an induced $y,z$-path $Q$ such that the only vertex in both $P$ and $Q$ is $y$. 
\end{lemma}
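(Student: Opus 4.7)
The plan is to deduce the lemma from standard connectivity theory, specifically the Fan Lemma. First, I would verify that $G$ is $2$-connected. Since $x, y, z$ are three distinct vertices, $|V(G)| \geq 3$; since $G$ is strongly $2$-monophonic with $G \neq P_3$, Proposition~\ref{prp:necessary}(i) ensures that $G$ has no cut vertex; and strongly $2$-monophonic graphs on at least three vertices are connected, as observed at the beginning of Section~\ref{s:strongly}. Hence $G$ is $2$-connected.

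Next, I would apply the Fan Lemma, a standard corollary of Menger's theorem: in a $2$-connected graph, for any vertex $y$ and any two distinct vertices $x, z$ different from $y$, there exist two paths $P_0 \colon y \to x$ and $Q_0 \colon y \to z$ with $V(P_0) \cap V(Q_0) = \{y\}$. A clean derivation of this fact adjoins a new vertex $y'$ adjacent to $x$ and $z$; the augmented graph is easily checked to remain $2$-connected, so Menger's theorem furnishes two internally disjoint $y, y'$-paths whose penultimate vertices, being neighbors of $y'$, are forced to be $x$ and $z$ separately. Removing $y'$ yields $P_0$ and $Q_0$.

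Finally, I would refine $P_0$ and $Q_0$ into induced paths while preserving their disjointness. Setting $P$ to be a shortest $y, x$-path in the induced subgraph $G[V(P_0)]$ produces an induced $y, x$-path in $G$ with $V(P) \subseteq V(P_0)$, since shortest paths are induced and $G[V(P_0)]$ is itself an induced subgraph of $G$; analogously one constructs an induced $y, z$-path $Q$ with $V(Q) \subseteq V(Q_0)$. Then
$$V(P) \cap V(Q) \subseteq V(P_0) \cap V(Q_0) = \{y\},$$
as required. The main conceptual point is that passing to induced subpaths within the same vertex sets cannot destroy internal disjointness; the only properties of strongly $2$-monophonic graphs being used are the absence of cut vertices and connectedness, and the exclusion of $P_3$ is essential, since in $P_3$ with $y$ a leaf the middle vertex lies on every $y$-to-other-leaf path.
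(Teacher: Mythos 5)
Your proof is correct, and it takes a genuinely different route from the paper's. You reduce the statement to pure connectivity theory: Proposition~\ref{prp:necessary}(i) together with connectedness gives $2$-connectedness, the fan lemma (via Menger, applied after attaching an auxiliary vertex adjacent to $x$ and $z$) yields a $y,x$-path and a $y,z$-path meeting only in $y$, and the standard trick of replacing each path by a shortest --- hence induced --- path inside the induced subgraph on its own vertex set preserves the disjointness while making both paths induced; each of these steps checks out, and your closing remark about why $P_3$ must be excluded is also accurate. The paper instead argues directly with the monophonic structure: when $xz\notin E(G)$ it takes an induced $x,z$-path through $y$ (using that $\{x,z\}$ is a monophonic set) and splits it at $y$, and when $xz\in E(G)$ it invokes Proposition~\ref{prp:necessary}(i) and~(iv) to locate a vertex $x'\in N(x)\setminus N[z]$ and build the two paths by hand from an induced $x',z$-path through $y$. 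Your argument is more general --- it shows the conclusion holds in every $2$-connected graph, so the strongly $2$-monophonic hypothesis enters only through the absence of cut vertices --- at the cost of importing Menger's theorem; the paper's argument stays entirely within the induced-path toolkit it has already developed and makes no appeal to external connectivity results. Either proof suffices for the subsequent application in Theorem~\ref{thm:product}, which only needs the two induced paths to be vertex-disjoint apart from $y$.
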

\begin{proof}
    Let $x,y,z\in V(G)$. If $xz\notin E(G)$, then since $G$ is strongly $2$-monophonic, there exists an induced $x,z$-path $R$ containing $y$. Hence if we denote by $P$ the $x,y$-subpath of $R$ and by $Q$ the $y,z$-subpath of $R$, then  $P$ and $Q$ are induced paths whose only vertex in common is $y$.
    
    Now, assume $xz\in E(G)$. If $y\in N(x)\cap N(z)$, then it is enough to consider $P:x,y$ and $Q:y,z$. 
    Otherwise, either $yz\notin E(G)$ or $yx\notin E(G)$. 
    We may assume without loss of generality that $y$ is not adjacent to $z$. Since $z$ is not universal, we infer by using Proposition~\ref{prp:necessary}(iv) that $N(x)$ is not contained in $N[z]$. Hence, let $x'\in N(x)\setminus N[z]$. 

    If $x'=y$, then by Proposition~\ref{prp:necessary}(i) it follows that $x$ is not a cut vertex of $G$. Thus, there exists an induced $y,z$-path $Q$ which does not contain $x$. Hence $P:x,y$ and $Q$ are induced $x,y$- and $y,z$-paths respectively with no vertices in common other than $y$. Thus, assume $x'\neq y$. Since $x'z\notin E(G)$, there exists an induced $x'z$-path $P':x'=v_1,\ldots,v_k=z$ which contains $y$, that is, where $y=v_j$ for some $j\in [k-1]$. 
    Observe that $x$ is not a vertex in $P'$ since $xx',xz\in E(G)$, but $x$ might be adjacent to some vertices of $P'$. Let $t=\max\{i\leq j:v_ix\in E(G)\}$.
    We may consider the paths $P:x,v_t,v_{t+1},\ldots,v_j=y$ and $Q:y=v_j,\ldots, v_k=z$. 
    It follows that $P$ and $Q$ are induced $x,y$- and $y,z$-paths which only share vertex $y$.
\end{proof}

\begin{theorem}\label{thm:product}
    If $G$ and $H$ are connected strongly $2$-monophonic graphs different from $P_3$, then $G\Box H$ is strongly $2$-monophonic.
\end{theorem}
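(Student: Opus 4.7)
The plan is to show that for every non-adjacent pair $x=(x_1,x_2)$, $y=(y_1,y_2)$ in $G\Box H$ and every $u=(u_1,u_2)\neq x,y$, there is an induced $x,y$-path through $u$. My strategy is to build such a path by concatenating lifts of induced paths from $G$ and from $H$ into a small number of $G$-layers (copies of $G$ with a fixed second coordinate) and $H$-layers (copies of $H$ with a fixed first coordinate) of the product, chosen so that no cross-layer edge can appear between non-consecutive vertices, which follows from the Cartesian-product adjacency rule.

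I would first dispatch Case~A, in which $x$ and $y$ share a coordinate; say $x_1=y_1=a$, so that $x_2y_2\notin E(H)$. If $u_1=a$, a lift to the $H$-layer through $a$ of an induced $x_2,y_2$-path through $u_2$ in $H$ (which exists by the strong $2$-monophonicity of $H$) suffices. Otherwise, I would pick an induced $a,u_1$-path $P_G$ in $G$ and an induced $x_2,y_2$-path $P_H$ in $H$ through $u_2$, and concatenate the lift of $P_G$ to the $G$-layer at $x_2$, the lift of $P_H$ to the $H$-layer at $u_1$, and the reversed lift of $P_G$ to the $G$-layer at $y_2$; inducedness follows from the non-edge $x_2y_2$ in $H$ (which forbids edges between the two $G$-layers) together with $u_1$ appearing in $P_G$ only as an endpoint. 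In Case~B (where $x_1\neq y_1$ and $x_2\neq y_2$), I would subdivide according to whether $x_1y_1\in E(G)$ and whether $x_2y_2\in E(H)$; when at least one of these two pairs is non-adjacent, a three-segment rectangular construction analogous to Case~A suffices.

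The delicate sub-case is when both $x_1y_1\in E(G)$ and $x_2y_2\in E(H)$, so that $d_{G\Box H}(x,y)=2$ with common neighbors $(x_1,y_2)$ and $(y_1,x_2)$. If $u$ equals one of these, the direct path $x,u,y$ is induced. Otherwise, I would invoke Lemma~\ref{l:disjointpaths} in $G$ at $(x_1,u_1,y_1)$ and in $H$ at $(x_2,u_2,y_2)$ to produce, in each factor, two induced paths sharing only the middle vertex, and assemble a four-segment path from $x$ along the $H$-layer at $x_1$ to $(x_1,u_2)$, then along the $G$-layer at $u_2$ to $u$, then along the $H$-layer at $u_1$ to $(u_1,y_2)$, and finally along the $G$-layer at $y_2$ to $y$. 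Sub-cases where $u$ has exactly one coordinate in $\{x_i,y_i\}$ are handled by analogous but simpler two-segment constructions, invoking Lemma~\ref{l:disjointpaths} only in the factor where both through-paths are needed.

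The main obstacle will be verifying that the four-segment path is induced: one must exclude adjacencies between non-consecutive vertices across each of the six pairings of segments. By the Cartesian-product adjacency rule, every such check reduces either to the sharing guarantee of Lemma~\ref{l:disjointpaths} (vertices from the lift of the first $G$-path in its layer cannot coincide with vertices from the lift of the second $G$-path in its layer, outside the common middle vertex $u_1$, and similarly in $H$), or to the observation that the endpoints $x_1,y_1$ of the $G$-paths do not appear in the interior of the opposite $G$-path, again guaranteed by Lemma~\ref{l:disjointpaths}.
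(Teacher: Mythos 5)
Your proposal is correct and takes essentially the same route as the paper: the heart of both arguments is applying Lemma~\ref{l:disjointpaths} in each factor and assembling a staircase of lifted induced paths (in your generic case $x\to(x_1,u_2)\to u\to(u_1,y_2)\to y$, in the paper's $(x,a)\to(z,a)\to(z,c)\to(y,c)\to(y,b)$ --- mirror images of each other), with inducedness checked segment against segment via the Cartesian adjacency rule. The only differences are organizational --- you split cases by adjacency of the projections $x_1y_1$, $x_2y_2$ rather than by whether $u$ shares a coordinate with $x$ or $y$, and your Case~A re-proves the paper's Remark~\ref{r:cartesian} explicitly --- so the two proofs are mathematically the same; just make sure your ``simpler'' sub-cases route the two through-paths from Lemma~\ref{l:disjointpaths} into \emph{distinct} layers (the lemma does not guarantee that their concatenation is induced within a single layer).
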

\begin{proof}
    Let $(x,a),(y,b)$ be non-adjacent vertices of $G\Box H$. We will show that $\{(x,a),(y,b)\}$ is a monophonic set.  That is, for any other vertex $(z,c)$ of $G\Box H$, there exist an induced $(x,a)(y,b)$-path containing $(z,c)$.

    Firstly, assume $a=b$. In this case $(x,a)$ and $(y,a)$ are in the same $G$-layer and $xy\notin E(G)$. Since $G$ is strongly 2-monophonic, $\{x,y\}$ is a monophonic set of $G$ and thus $\{(x,a),(y,a)\}$ is a monophonic set of $G \Box H$ by Remark~\ref{r:cartesian}. 

    Now, consider $a\neq b$. Observe that the case $x=y$ can be resolved in the same way as in the previous paragraph by reversing the roles of $G$ and $H$. So, we may assume $x\neq y$.
    We will distinguish two cases.

    \begin{description}
        \item[Case 1.] If $z\in \{x,y\}$ or $c\in \{a,b\}$. 

        If both conditions hold, we can assume, without loss of generality, that $z=x$. Then, assuming that $c=b$, we then find a shortest $(x,a),(y,b)$-path, which contains $(z,c)=(x,b)$ and we are done. Indeed, letting $x=v_0,\ldots, v_k=y$ be a shortest $x,y$-path in $G$, and $a=d_0,\ldots,d_j=b$ be a shortest $a,b$-path in $H$, the path $(x,a)=(x,d_0),\ldots,(x,d_j)=(x,b)=(v_0,b),\ldots,(v_k,b)=(y,b)$ is a shortest $(x,a),(y,b)$-path containing $(x,b)$.

        Now, let $z\notin\{x,y\}$ and $c\in \{a,b\}$. Assume without loss of generality that $c=a$. Let $Q:a=d_0,\ldots,d_j=b$ be a shortest $a,b$-path in $H$. 
        By Lemma~\ref{l:disjointpaths}, there exist two induced paths in $G$, $P_1:x=v_0,\ldots,v_{k_1}=z$ and $P_2:z=u_0,\ldots,u_{k_2}=y$ that only share vertex $z$. Thus, consider the following path in $G\Box H$: $P:(x,a)=(v_0,a),\ldots,(v_{k_1},a)=(z,a)=(z,d_0),\ldots,(z,d_j)=(z,b)=(u_0,b),\ldots,(u_{k_2},b)=(y,b)$, which is a $(x,a),(y,b)$-path containing $(z,a)$, and is induced since $P_1$ and $P_2$ do not share any vertex except from $z$.
        Note that the only vertex of this path with first coordinate $x$ is vertex $(x,a)$. The path $P$ is depicted on Fig.~\ref{fig:product-case1}.
        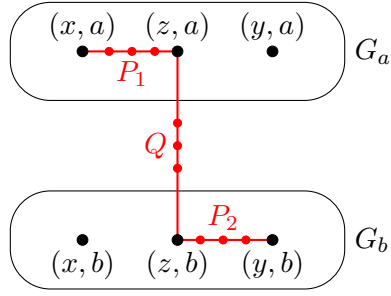
\begin{figure}[ht]
        \centering
        \begin{tikzpicture}[rotate=90]
            \draw[rounded corners=15pt] (-0.65,-2.2) rectangle (0.65,2.2);
            \node[right] at (0,-2.2) {$G_b$};
            \draw[rounded corners=15pt] (1.85,-2.2) rectangle (3.15,2.2);
            \node[right] at (2.5,-2.2) {$G_a$};
    
            \draw[thick, red] (0,-1.25) -- (0,0) -- (2.5,0) -- (2.5,1.25);
    
            \filldraw[red] (0,-0.9) circle (1.5pt);
            \filldraw[red] (0,-0.6) circle (1.5pt);
            \filldraw[red] (0,-0.3) circle (1.5pt);
            \node[red,above] at (0,-0.6) {$P_2$}; 
    
            \filldraw[red] (2.5,0.9) circle (1.5pt);
            \filldraw[red] (2.5,0.6) circle (1.5pt);
            \filldraw[red] (2.5,0.3) circle (1.5pt);
            \node[red,below] at (2.5,0.6) {$P_1$}; 
            
            \filldraw[red] (0.95,0) circle (1.5pt);
            \filldraw[red] (1.25,0) circle (1.5pt);
            \filldraw[red] (1.55,0) circle (1.5pt);
            \node[red,left] at (1.25,0) {$Q$}; 
    
            \filldraw (0,-1.25) circle (2pt) node[below] {$(y,b)$};
            \filldraw (0,0) circle (2pt) node[below] {$(z,b)$};
            \filldraw (0,1.25) circle (2pt) node[below] {$(x,b)$};
    
            \filldraw (2.5,-1.25) circle (2pt) node[above] {$(y,a)$};
            \filldraw (2.5,0) circle (2pt) node[above] {$(z,a)$};
            \filldraw (2.5,1.25) circle (2pt) node[above] {$(x,a)$};
        \end{tikzpicture}
        \caption{$(x,a),(y,b)$-path $P$ containing $(z,a)$}
        \label{fig:product-case1}
        \end{figure}

        The case when $z\in\{x,y\}$ and $c\notin \{a,b\}$ is symmetric to the previous one and can be proved analogously.

        \item[Case 2.] If $z\notin\{x,y\}$ and $c\notin \{a,b\}$.

        Consider the paths $P_1$ and $P_2$ as defined in the previous case (that is, they are induced $x,z$- and $z,y$-paths, respectively, which only share the vertex $z$. In a similar way, by using Lemma~\ref{l:disjointpaths}, there exist two induced paths in $H$, $Q_1:a=a_0,\ldots,a_{j_1}=c$ and $Q_2:c=b_0,\ldots,b_{j_2}=b$ that only share the vertex $c$. 
        Now, consider the path $R:(x,a)=(v_0,a),\ldots,(v_{k_1},a)=(z,a)=(z,a_0),\ldots,(z,a_{j_1})=(z,c)=(u_0,c),\ldots,(u_{k_2},c)=(y,c)=(y,b_0),\ldots,(y,b_{j_2})=(y,b)$. (See also Fig.~\ref{fig:product-case2}.) It is clear that $R$ is an $(x,a),(y,b)$-path that contains vertex $(z,c)$. Each of the four sections of $R$, namely the $(x,a),(z,a)$-subpath of $R$, the $(z,a),(z,c)$-subpath of $R$, the $(z,c),(y,c)$-subpath of $R$ and $(y,c),(y,b)$-subpath of $R$ are induced. Since the concatenation of the paths $P_1$ and $P_2$ (as well as $Q_1$ and $Q_2$) is a path due to  Lemma~\ref{l:disjointpaths}, we infer that $R$ is induced.

        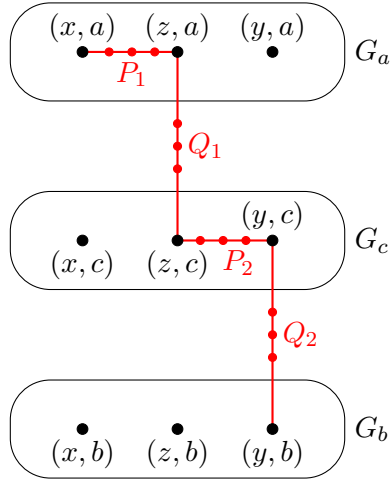
\begin{figure}[ht]
        \centering
        \begin{tikzpicture}[rotate=90]
            \draw[rounded corners=15pt] (-3.15,-2.2) rectangle (-1.85,2.2);
            \node[right] at (-2.5,-2.2) {$G_b$};
            \draw[rounded corners=15pt] (-0.65,-2.2) rectangle (0.65,2.2);
            \node[right] at (0,-2.2) {$G_c$};
            \draw[rounded corners=15pt] (1.85,-2.2) rectangle (3.15,2.2);
            \node[right] at (2.5,-2.2) {$G_a$};
    
            \draw[thick, red] (-2.5,-1.25) -- (0,-1.25) -- (0,0) -- (2.5,0) -- (2.5,1.25);
    
            \filldraw[red] (0,-0.9) circle (1.5pt);
            \filldraw[red] (0,-0.6) circle (1.5pt);
            \filldraw[red] (0,-0.3) circle (1.5pt);
            \node[red,below] at (0,-0.8) {$P_2$}; 
    
            \filldraw[red] (2.5,0.9) circle (1.5pt);
            \filldraw[red] (2.5,0.6) circle (1.5pt);
            \filldraw[red] (2.5,0.3) circle (1.5pt);
            \node[red,below] at (2.5,0.6) {$P_1$}; 
            
            \filldraw[red] (0.95,0) circle (1.5pt);
            \filldraw[red] (1.25,0) circle (1.5pt);
            \filldraw[red] (1.55,0) circle (1.5pt);
            \node[red,right] at (1.25,0) {$Q_1$}; 
            
            \filldraw[red] (-0.95,-1.25) circle (1.5pt);
            \filldraw[red] (-1.25,-1.25) circle (1.5pt);
            \filldraw[red] (-1.55,-1.25) circle (1.5pt);
            \node[red,right] at (-1.25,-1.25) {$Q_2$}; 
    
            \filldraw (-2.5,-1.25) circle (2pt) node[below] {$(y,b)$};
            \filldraw (-2.5,0) circle (2pt) node[below] {$(z,b)$};
            \filldraw (-2.5,1.25) circle (2pt) node[below] {$(x,b)$};
    
            \filldraw (0,-1.25) circle (2pt) node[above] {$(y,c)$};
            \filldraw (0,0) circle (2pt) node[below] {$(z,c)$};
            \filldraw (0,1.25) circle (2pt) node[below] {$(x,c)$};
    
            \filldraw (2.5,-1.25) circle (2pt) node[above] {$(y,a)$};
            \filldraw (2.5,0) circle (2pt) node[above] {$(z,a)$};
            \filldraw (2.5,1.25) circle (2pt) node[above] {$(x,a)$};
        \end{tikzpicture}
        \caption{$(x,a),(y,b)$-path containing $(z,c)$}
        \label{fig:product-case2}
        \end{figure}
    \end{description}
\end{proof}

 From Theorem~\ref{thm:product} we immediately infer that all hypercubes are strongly $2$-monophonic.
Now, note that also $K_n$ has the property from Lemma~\ref{l:disjointpaths} that for any three distinct vertices $x,y,z$, there exist an induced $x,y$-path $P$ and an induced $y,z$-path $Q$ such that the only vertex in both $P$ and $Q$ is $y$. Hence, letting $H=K_n$ in Theorem~\ref{thm:product} and using the same proof we can derive the following.

\begin{corollary}\label{c:productComplete}
    If $G$ is strongly 2-monophonic graph not isomorphic to $P_3$, then $G \Box K_n$ is a strongly 2-monophonic graph, for any $n\geq 1$.
\end{corollary}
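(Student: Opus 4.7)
The plan is to imitate the proof of Theorem~\ref{thm:product} verbatim, with the only substantive observation being that $K_n$ trivially enjoys the disjoint-paths conclusion of Lemma~\ref{l:disjointpaths} (for any three distinct vertices $\alpha,\beta,\gamma\in V(K_n)$ the two single-edge paths $\alpha\beta$ and $\beta\gamma$ are induced and share only $\beta$). The case $n=1$ is immediate since $G\Box K_1\cong G$, so assume $n\geq 2$.

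Given non-adjacent $(x,a),(y,b)\in V(G\Box K_n)$, first note $x\neq y$: otherwise $a\neq b$ in $K_n$ would force adjacency. Consequently, either (i) $a=b$ with $xy\notin E(G)$, or (ii) $a\neq b$ with $x\neq y$. In case (i), because $G$ is strongly $2$-monophonic, $\{x,y\}$ is a monophonic set of $G$, and Remark~\ref{r:cartesian} hands us that $\{(x,a),(y,a)\}$ is a monophonic set of $G\Box K_n$. This is the trivial part.

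In case (ii), fix any $(z,c)$ and mimic the construction of Theorem~\ref{thm:product}. If $z\in\{x,y\}$ or $c\in\{a,b\}$, build the same kind of two-leg path: traverse a shortest path in the appropriate $G$- or $K_n$-layer and then switch coordinates once, exactly as in Case~1 of that proof; when an induced $G$-path from $x$ to $y$ through $z$ is needed, invoke Lemma~\ref{l:disjointpaths} in $G$ to obtain induced $x,z$- and $z,y$-paths sharing only $z$. If $z\notin\{x,y\}$ and $c\notin\{a,b\}$, use Lemma~\ref{l:disjointpaths} in $G$ to get induced paths $P_1:x,\ldots,z$ and $P_2:z,\ldots,y$ meeting only at $z$, and use the trivial version of the lemma in $K_n$: take $Q_1:a,c$ and $Q_2:c,b$ (both single edges, sharing only $c$). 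The concatenation
\[
(x,a)\,\xrightarrow{P_1\times\{a\}}\,(z,a)\to(z,c)\,\xrightarrow{P_2\times\{c\}}\,(y,c)\to(y,b)
\]
is the required induced path, by exactly the same analysis as in Case~2 of Theorem~\ref{thm:product}: each of the four maximal one-layer segments is induced, and no cross-layer chord can exist because the $G$-coordinates $x$ and $y$ appear only at the endpoints and the $K_n$-coordinates $a$ and $b$ appear only at the endpoints of the corresponding segments.

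The only real point to verify is that replacing $H$ by $K_n$ does not demand the ``reverse the roles of $G$ and $H$'' subcase of Theorem~\ref{thm:product}; this subcase corresponded to $x=y$, which, as noted, cannot arise for non-adjacent vertices when the second factor is complete. Hence there is essentially no new obstacle: the single-edge substitutions in the $K_n$-layer make every layer-switch automatically legal, and every inducedness check reduces to the one already carried out for Theorem~\ref{thm:product}.
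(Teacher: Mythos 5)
Your proposal is correct and follows exactly the paper's route: the paper likewise observes that $K_n$ trivially satisfies the disjoint-paths conclusion of Lemma~\ref{l:disjointpaths} and then reruns the proof of Theorem~\ref{thm:product} with $H=K_n$. Your additional remark that the $x=y$ subcase (the only place where strong $2$-monophonicity of the second factor is used beyond the lemma) cannot occur when the second factor is complete is a useful explicit check that the paper leaves implicit.
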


Corollary~\ref{c:productComplete} implies that the sufficient condition in Theorem~\ref{thm:product} that both factor graphs are strongly $2$-monophonic is not necessary for the Cartesian product of two graphs to be strongly $2$-monophonic. Moreover, it is easy to see that the graphs $K_m\Box K_n$, where $m\ge n\ge 2$, are all strongly $2$-monophonic, yet whenever $n\ge 3$ none of the factors is in this class. This together with Corollary~\ref{c:productComplete} implies that non-complete Hamming graphs are strongly $2$-monophonic.

\begin{corollary}\label{cor:hypercubes-S2M}
A Hamming graph $H_{m_1,\ldots,m_k}$  is strongly $2$-monophonic except when $k=1$ and $m_1\ge 3$.
\end{corollary}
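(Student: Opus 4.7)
The plan is to split according to the number of factors. For $k=1$, the graph $H_{m_1}=K_{m_1}$ is a complete graph; the cases $m_1\in\{1,2\}$ are trivially strongly $2$-monophonic, while for $m_1\ge 3$ every vertex of $K_{m_1}$ is simplicial and therefore belongs to every monophonic set, which forces $m(K_{m_1})=m_1\ge 3$. Thus $K_{m_1}$ is not even $2$-monophonic when $m_1\ge 3$, establishing the stated exception. For the remaining cases we may assume $m_i\ge 2$ for every $i$, since a factor $K_1$ leaves the Cartesian product unchanged up to isomorphism.

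For $k\ge 2$ I would argue by induction on $k$. The base case $k=2$ requires showing that $K_m\Box K_n$ is strongly $2$-monophonic whenever $m,n\ge 2$; this is the statement asserted just before the corollary and is the only part that needs a direct verification. Given two non-adjacent vertices $(a,b),(c,d)$ of $K_m\Box K_n$ (non-adjacency in this graph means precisely $a\ne c$ and $b\ne d$), the plan is to exhibit, for every third vertex $(e,f)$, an induced $(a,b),(c,d)$-path through $(e,f)$ using a short ad hoc construction: for $(e,f)\in\{(a,d),(c,b)\}$ use the length-$2$ path $(a,b),(e,f),(c,d)$; for exactly one coordinate equal to an endpoint coordinate (say $e=a$, $f\notin\{b,d\}$) use $(a,b),(a,f),(c,f),(c,d)$; and for the remaining case where $e\notin\{a,c\}$ and $f\notin\{b,d\}$ use $(a,b),(a,f),(e,f),(e,d),(c,d)$. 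In each case one directly checks that consecutive vertices differ in exactly one coordinate while all non-consecutive pairs differ in both coordinates, so the paths are induced.

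For the inductive step $k\ge 3$ I would write $H_{m_1,\ldots,m_k}=G\Box K_{m_k}$ with $G=H_{m_1,\ldots,m_{k-1}}$. By the induction hypothesis, $G$ is strongly $2$-monophonic, and since its order is $m_1m_2\cdots m_{k-1}\ge 2^{k-1}\ge 4$, we have $G\not\cong P_3$. Corollary~\ref{c:productComplete} then immediately yields that $G\Box K_{m_k}$ is strongly $2$-monophonic, completing the induction.

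The only nontrivial step is the base case $k=2$, i.e.\ verifying that $K_m\Box K_n$ is strongly $2$-monophonic for $m,n\ge 2$; once this is in hand, the rest is a bookkeeping application of Corollary~\ref{c:productComplete}. Everything else, including the trivial case $m=n=2$ where $K_2\Box K_2\cong C_4$ and the two diagonal vertices evidently form a monophonic set, follows immediately from the explicit induced paths exhibited above.
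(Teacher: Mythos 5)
Your proof is correct and follows essentially the same route as the paper: establish that $K_m\Box K_n$ ($m,n\ge 2$) is strongly $2$-monophonic and then iterate Corollary~\ref{c:productComplete} to absorb the remaining complete factors, with the $k=1$, $m_1\ge 3$ exception coming from $m(K_{m_1})=m_1$. The only difference is that you spell out the explicit induced paths for the base case $K_m\Box K_n$, which the paper dismisses as ``easy to see''; your case analysis there checks out.
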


\subsection{Johnson graphs}

In this section we show that also Johnson graphs are strongly 2-monophonic. First, since for any $r$ the graph $J(n,r)$ is isomorphic to $J(n,n-r)$, we may assume without loss of generality that $2 \leq r \leq \lfloor \frac{n}{2} \rfloor$. We will use several basic properties of Johnson graphs. 
As mentioned earlier, Johnson graphs form a subfamily of generalized Johnson graphs, whose diameter and distance properties were studied in~\cite{agong2028diameter}.
First, for any $A,B \in V(J(n,r))$ it follows that $d(A,B)=r-|A \cap B|$ and consequently ${\rm diam}(J(n,r))=\min{\{r,n-r\}}$. Clearly, if $A=\{x_1,\ldots ,x_i,x_{i+1},\ldots , x_r\}$, $B=\{x_1,\ldots ,x_i,y_{i+1},\ldots ,y_r\}$ with $|A \cap B|= i$, then $A, A_{i+1},A_{i+2},\ldots , A_r=B$, is a shortest $A,B$-path if we define $A_i:=A$ and for $j \geq i+1$, $A_{j}:=(A_{j-1}\setminus \{x_{j}\}) \cup \{y_j\}$.

\begin{theorem}\label{thm:johnson-strong}
    If $r\geq 2$, then $J(n,r)$ is strongly 2-monophonic.
\end{theorem}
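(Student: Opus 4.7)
The approach parallels that of Theorem~\ref{thm:odd-strong} for Kneser graphs. Given two non-adjacent vertices $A,B\in V(J(n,r))$ (so $|A\cap B|\le r-2$) and an arbitrary vertex $U\in V(J(n,r))\setminus\{A,B\}$, the plan is to construct an induced $A,U$-path $P$ together with an induced $U,B$-path $Q$ whose concatenation yields an induced $A,B$-path through $U$.

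A crucial simplification in the Johnson setting is the distance formula $d(X,Y)=r-|X\cap Y|$, which implies that adjacency is the condition $|X\cap Y|=r-1$, and that every shortest path is automatically induced: on a shortest path $X_0,\ldots,X_k$, for any $i<j$ with $j-i\ge 2$ one has $d(X_i,X_j)=j-i\ge 2$, hence $X_i\not\sim X_j$. Thus $P$ and $Q$ can be taken to be shortest paths realized by explicit single-element swap sequences, and the only combinatorial burden is controlling how the two halves interact.

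I would first dispose of the easy case $U\in\mathcal{I}(A,B):=\{X\in V(J(n,r)):A\cap B\subseteq X\subseteq A\cup B\}$: here the swaps of a shortest $A,B$-path of length $r-|A\cap B|$ can be ordered so as to visit $U$ at an intermediate step, producing a shortest (and therefore induced) $A,B$-path through $U$. For the remaining case $U\notin\mathcal{I}(A,B)$, I partition $[n]$ via the Venn diagram of $A,B,U$ into the eight standard regions and construct $P$ as a shortest $A,U$-path that first swaps the elements of $A\setminus(B\cup U)$ for elements of $U\setminus(A\cup B)$ and then swaps the elements of $(A\cap B)\setminus U$ for elements of $(B\cap U)\setminus A$; the path $Q$ is built analogously from $U$ to $B$ with the roles of $A$ and $B$ interchanged. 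The key structural property I would leverage is that every interior vertex $X$ of $P$ retains a ``marker'' element of $A\setminus B$ that is missing from every interior vertex $Y$ of $Q$, and that every interior $Y$ of $Q$ likewise retains a marker from $B\setminus A$ missing from every $X$; these two guaranteed differences force the symmetric difference $X\triangle Y$ to have size at least four, whence $X\neq Y$ and $X\not\sim Y$, so the concatenation is induced.

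The main obstacle, as in Theorem~\ref{thm:odd-strong}, lies in the degenerate boundary cases where the marker strategy breaks down, namely when $A\setminus B\subseteq U$, when $B\setminus A\subseteq U$, or when one of the swap regions used above is empty (which forces $P$ or $Q$ to collapse to an edge, or to lack any element on which to base a marker). Each such configuration has to be dispatched by a tailored construction, typically by performing an initial ``detour'' swap that enlarges a small Venn region before resuming the main construction, mirroring the case breakdown in the Kneser-graph proof. Because shortest paths in $J(n,r)$ are induced---in sharp contrast with $K(2r+1,r)$---the verification within each case should be considerably lighter than in the Kneser analysis, and the overall argument becomes more combinatorial bookkeeping than deep structural work.
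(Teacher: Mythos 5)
Your overall architecture coincides with the paper's: build a shortest (hence automatically induced) $A,U$-path and a shortest $U,B$-path by ordered single-element swaps organized around the Venn diagram of $A$, $B$, $U$, and then verify that the concatenation is induced by exhibiting marker elements separating each vertex of one half from each vertex of the other. The paper implements exactly this with the vertices $w_i=B'_{\leq i}\cup X'_{\geq r-t-i}\cup C\cup D$ and $v_j=C'_{\geq j}\cup Y'_{\leq r-s-j}\cup B\cup D$, and, as you observe, the fact that shortest paths are induced makes each half easy.

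The gap is in your key verification step. You claim that one marker $m_1\in X\setminus Y$ (with $m_1$ in the first endpoint minus the second) together with one marker $m_2\in Y\setminus X$ forces $|X\triangle Y|\geq 4$. It does not: it only gives $|X\setminus Y|\geq 1$ and $|Y\setminus X|\geq 1$, i.e.\ $|X\triangle Y|\geq 2$, which is exactly consistent with $X$ and $Y$ being \emph{adjacent} in $J(n,r)$ (adjacency means $|X\triangle Y|=2$). To rule out adjacency you must produce two elements on the \emph{same} side, i.e.\ two elements of $v\setminus w$, so that $|v\cap w|\leq r-2$; this is precisely what the paper does, exhibiting $y_1$ and $b_{r-t}$, both lying in $v_j\setminus w_i$, with fallbacks via $|y\setminus x|\geq 2$ (non-adjacency of the endpoints) when one of these degenerates. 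A secondary problem: with your stated swap order, in which the elements of $A\setminus(B\cup U)$ are removed first along $P$, the later interior vertices of $P$ contain no element of $A\setminus(B\cup U)$, so the only candidate markers in $A\setminus B$ lie in $(A\cap U)\setminus B$; but those elements belong to $U$ and hence to the early interior vertices of $Q$, so they are not ``missing from every interior vertex of $Q$.'' Thus both the counting and the existence of the claimed markers need repair (and you should also check the endpoints $A$ and $B$ against the opposite half, not only interior vertices); the repaired argument is essentially the one given in the paper.
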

\begin{proof}
    Let $x$ and $y$ be two non-adjacent vertices of $J(n,r)$, this is, $|x\cap y|\leq r-2$. We shall prove that $S=\{x,y\}$ is a monophonic set of $J(n,r)$. That is, we will prove that every vertex $u\in V(G)$ lies on an induced $x,y$-path. 
    As this is clear for $x$ and $y$, let $u\in V(J(n,r))\setminus S$. Let $t=|u\cap x|$ and $s=|u\cap y|$. 
    
    Let us define $X=x\setminus(y\cup u)$, $Y=y\setminus(x\cup u)$, $U=u\setminus(x\cup y)$, $A=(x\cap y)\setminus u$, $B=(y\cap u)\setminus x$, $C=(x\cap u)\setminus y$ and $D=x\cap y\cap u$. Note that $|X\cup A|=|U\cup B|=r-t\geq 1$ and $|Y\cup A|=|U\cup C|=r-s \geq 1$.
    Let $X'=x\setminus u=A\cup X=\{x_1,\ldots,x_{r-t}\}$ and $Y'=y\setminus u=Y\cup A=\{y_1,\ldots,y_{r-s}\}$, where $x_i=y_{i+|Y|}\in A$ for $i\leq |A|$. Let $B'=u\setminus x=U\cup B=\{b_1,\ldots, b_{r-t}\}$ and $C'=u\setminus y=C\cup U=\{c_1,\ldots, c_{r-s}\}$, where $b_i=c_{i+|C|}\in U$ for $i\leq |U|$.
    
    For $i\in \{0\}\cup[r-t]$ and $j\in \{0\}\cup[r-s]$, let
    $$w_i=B'_{\leq i}\cup X'_{\geq r-t-i}\cup C\cup D,$$
    $$v_j=C'_{\geq j}\cup Y'_{\leq r-s-j}\cup B\cup D,$$
    Notice that the path $P:x=w_0,\ldots,w_{r-t}=u$ is a shortest $x,u$-path, since $d(x,u)=r-|x \cap u|=r-t$ and thus $P$ is also induced. Similarly, the path $Q:y=v_0,\ldots,v_{r-s}=u$ is an induced $y,u$-path. 
    We claim that $x=w_0,w_1,\ldots,w_{r-t}=u=v_{r-s},v_{r-s-1},\ldots,v_0=y$ is an induced $x,y$-path which clearly contains $u$.
    Let $w=w_i$ for some $i\in \{0\}\cup [r-t-1]$ and let $v=v_j$ for some $j\in \{0\}\cup [r-s-1]$. We shall show that $w\neq v$ and $w$ is not adjacent to $v$. It suffices to show that $|w\cap v|\leq r-2$, or equivalently, $|v\setminus w|\geq 2$.

    Note that $y_1\in v$ since $j<r-s$. Further, if $Y\neq \emptyset$, $y_1\in Y$ so $y_1\notin w$, and if $Y=\emptyset$, then $y_1=x_1\in A$ and $x_1\notin w$ if $i>0$. If $i=0$, then $B\subseteq v\setminus w$ and $|B|=|y\setminus x|\geq 2$.
    On the other hand, notice that $b_{r-t}\notin w$ since $i<r-t$. If $B\neq\emptyset$, then $b_{r-t}\in B\subseteq v$. If $B=\emptyset$, then $b_{r-t}\in U$ and $b_{r-t}=c_{r-s}\in v$ for $j\neq 0$. 
    So, unless $B=\emptyset$ and $j=0$ or $Y=\emptyset$ and $i=0$, we get $\{b_{r-t},y_1\}\subseteq v\setminus w$.
    In the former cases, we have $|v\setminus w|\geq |y\setminus x|\geq 2$.
    In either case, $|v\setminus w|\geq 2$.
\end{proof}

\section{Concluding remarks}

In this paper, we studied the monophonic number of Kneser graphs. We proved that $m(K(n,2))=3$ for any $n \geq 5$ and that $m(K(n,r))=2$ for any $r\geq 3$ and $n\geq 2r+1$. Moreover, we established a stronger result: in any Kneser graph $K(n,r)$ with $m(K(n,r))=2$, any pair of non-adjacent vertices forms a monophonic set. This motivated us to introduce the class of strongly 2-monophonic graphs, defined as graphs $G$ with $m(G)=2$ and the property that for every pair of non-adjacent vertices $x,y$, the set $\{x,y\}$ is a monophonic set. 

As mentioned in Section~\ref{ss:chordal}, adding or removing a universal vertex does not change whether a graph is strongly $2$-monophonic. Similarly, if $G$ is a strongly $2$-monophonic graph, and $u$ and $v$ are open twins in $G$ (that is, $N_G(u)=N_G(v)$), then by Proposition~\ref{prp:necessary}(iii), $u$ is a universal vertex in $G-v$, and so $G-\{u,v\}$ is also strongly $2$-monophonic. Therefore, when investigating which graphs are strongly 2-monophonic, we may restrict to the graphs without universal vertices and without any open twins. 
Any other strongly 2-monophonic graph can be obtained from such ``basic'' strongly 2-monophonic graphs by performing the operations of adding a universal vertex and adding an open twin to a universal vertex (the operations can be performed several times in any order).   
For instance, from $G=2K_1$, which is the smallest strongly $2$-monophonic graph, we can obtain by the described operations the graphs $K_n-M$, where $M$ is a matching that is removed from the complete graph of order $n$.

Results in the paper indicate that several families of graphs  with high symmetries (Kneser graphs, Johnson graph, cocktail-party graphs, Hamming graphs) are strongly $2$-monophonic, while the graphs with some kind of special vertices (such as chordal graphs) do not have this property. 
Hence, strongly 2-monophonic graphs may be of independent interest and may hide some intriguing qualities. This leads us to the following problem.

\begin{problem}
Characterize strongly 2-monophonic graphs.
\end{problem}

The above problem is interesting also from the computational aspect. As mentioned earlier determining $m(G)$ is NP-hard. However, this may not be the case for the studied class of graphs, and we ask the following.

\begin{problem}
    Determine the computational complexity of recognizing strongly 2-monophonic graphs.
\end{problem}

Possibly less ambitious but still interesting direction is to seek characterizations of strongly 2-monophonic graphs within specific graph classes. For example, we have given such results  for chordal graphs and dismantlable graphs. In particular, the following question arises. 

\begin{question}
    Which vertex-transitive graphs are strongly 2-monophonic? 
\end{question}

Consider generalized Johnson graph $J(6,4,2)$, and let $x,y$ be two nonadjacent vertices. Without loss of generality, $x=\{1,2,3,4\}$ and $y=\{1,2,3,5\}$. It is not hard to see that $\{1,2,3,6\}$ is not in the monophonic interval between $x$ and $y$, whereas $S=\{x,y,\{1,2,3,6\}\}$ is a monophonic set. Hence, $m(J(6,4,2))=3$.
If $n\geq 7$, then for each pair $x,y$ of nonadjacent vertices, the set $\{x,y\}$ is a monophonic set and $J(n,4,2)$ turns out to be strongly 2-monophonic.

\begin{problem}
    Determine all generalized Johnson graphs which are (strongly) 2-monophonic.
\end{problem}
\begin{question}
    Are all 2-monophonic generalized Johnson graphs strongly 2-monophonic?
\end{question}

Finally, our investigation revealed that graphs with monophonic number 2
are not yet well understood. Thus, a broader study of 2-monophonic graphs seems both natural and promising.

\section*{Acknowledgments}
B.B. and T.D. acknowledge the financial support of the Slovenian Research and Innovation Agency (research core funding No.\ P1-0297, projects N1-0285, N1-0431).
M.G.C. was partially supported by the Argentinian National Agency for the Promotion of Research, Technological Development and Innovation (project PICT-2020-03032), the Argentinian National Council for Scientific and Technical Research (project PIP CONICET 1900) and the National University of Rosario (project PID 80020210300068UR).
M.G.C. gratefully acknowledges that this work was carried out during a research visit to the University of Maribor as part of the UM Visiting Researcher Programme 2025.

\bibliographystyle{apalike}
\bibliography{biblio}

\end{document}